\newcommand{\lb}[1]{\label{#1}}
\title{Bifurcation delay - the case of the sequence:\\stable focus - unstable focus - unstable node}
\author{Eric Beno\^{\i}t\thanks{Laboratoire de Math\'ematiques et
    Applications, Universit\'e de la Rochelle, avenue Michel
    Cr\'epeau, 17042 LA ROCHELLE and/or projet COMORE, INRIA, 2004 route des Lucioles 06902 SOPHIA-ANTIPOLIS    courriel: ebenoit@univ-lr.fr}}
\date{\today}
\begin{document}
\newtheorem{theorem}{Theorem}
\newtheorem{proposition}[theorem]{Proposition}
\newtheorem{definition}{Definition}
\newtheorem{conjecture}[theorem]{Conjecture}
\newtheorem{lemma}[theorem]{Lemma}
\newcommand{\qed}{\ifmmode$\blacksquare$\else{\unskip\nobreak\hfil
	\penalty50\hskip1em\null\nobreak\hfil$\blacksquare$
	\parfillskip=0pt\finalhyphendemerits=0\endgraf}\fi}
\newtheorem{demth}{Proof}
	\renewcommand{\thedemth}{}
	\newenvironment{proof}{\begin{demth}\rm}{\qed\end{demth}}
\newcommand{\ip}{\mbox{\rm O\hspace{-0.67em}/}\hspace{0.3em}}
\newcommand{\eps}{\varepsilon}
\newcommand{\CC}{\mathbbm{C}}%{\mbox{\mat C}}
\newcommand{\ud}{{\mbox{$\displaystyle\scriptstyle\frac{1}{2}$}}}
\newcommand{\ut}{{\mbox{$\displaystyle\scriptstyle\frac{1}{3}$}}}
\newcommand{\dt}{{\mbox{$\displaystyle\scriptstyle\frac{2}{3}$}}}
\newcommand{\td}{{\mbox{$\displaystyle\scriptstyle\frac{3}{2}$}}}
\newcommand{\uq}{{\mbox{$\displaystyle\scriptstyle\frac{1}{4}$}}}
\newcommand{\us}{{\mbox{$\displaystyle\scriptstyle\frac{1}{6}$}}}
\newcommand{\uud}{{\mbox{$\displaystyle\scriptstyle\frac{1}{12}$}}}
\newcommand{\systemededeux}[4]{\left\{\begin{array}{rcl}#1&=&#2\\#3&=&#4
	\end{array}\right.}
\newcommand{\systemedeux}[5]{\begin{equation}\lb{#1}\left\{
	\begin{array}{rcl}#2& = & #3 \\ #4 & = & #5 \end{array}\right.
	\end{equation}}
\maketitle

\begin{abstract}
Let us give a two dimensional family of real vector fields. We suppose that there exists a stationary point where the linearized vector field has successively a stable focus, an unstable focus and an unstable node. When the parameter moves slowly, a bifurcation delay appears due to the Hopf bifurcation. The studied question in this article is the continuation of the delay after the focus-node bifurcation.

AMS classification: 34D15, 34E15, 34E18, 34E20, 34M60

Keywords: Hopf-bifurcation, bifurcation-delay, slow-fast, canard, Airy, relief.

\end{abstract}

\section{Introduction}
\setcounter{footnote}{0}
"Singular perturbations" is a studied domain from many years ago. Since 1980, many contributions were written because new tools were applied to the subject. The main studied objects are the {\it slow fast vector fields} also known as {\it systems with two time-scales}. We will give the problem here with a more particular point of view: the {\it bifurcation delay} , as in articles \cite{Lob1,Ben12,Lobry92b,FruSch08}. We write the studied system:  $\eps\dot{X}=f(t,X,\eps)$, where $\eps$ is a real positive parameter which tends to zero. For a better understanding of the expression {\it dynamic bifurcation} it is better to write the system after a rescaling of the variable:
$$\systemededeux{\dot{X}}{f(a,X,\eps)}{\dot{a}}{\eps}$$
where $a$ is a "slowly varying" parameter.

The main objects in this study are the eigenvalues of the linear part of equation $\dot{X}=f(a,X,0)$ near the quasi-stationary point. Indeed, they give a characterization of the stability of the equilibrium of the fast vector field at this point. The aim of this study is to understand what happens when the stability of a quasi-stationary point changes. A bifurcation occurs when at least one of the eigenvalues has a null real part.   

In this article we restrict our study to two-dimensional real systems. In this situation, the generic bifurcations are: the saddle-node bifurcation, the Hopf bifurcation and the focus-node bifurcation.

The saddle-node bifurcation is solved by the {\it turning point} theory: when the real part of one of the eigenvalue becomes positive, there is no delay and a trajectory of the systems leaves the neighborhood of the quasi-stationary point when it reaches the bifurcation. For this study, the study of one-dimensional systems is sufficient: we have a decomposition of the phase space where only the one-dimensional factor is interesting. There exist many articles on this subject, we will be interested particularly by  \cite{BFSW97} where the method of {\it relief} is used. The article \cite{Fenichel79} introduces the geometrical methods of {\it Fenichel's manifold}. 

The Hopf delayed bifurcation is  well explained in  \cite{Wal2}, we will upgrade the results in paragraph \ref{para:H} below.

In a focus-node bifurcation, the stability of the quasi stationary point does not change, then, locally, there is no problem of canards or bifurcation delays. Indeed,  when there is a bifurcation delay at a Hopf-bifurcation point, it is possible to evaluate the value of the delay, and the main question is to understand the influence of the focus node bifurcation to this delay.

In paragraph \ref{para:H}, the Hopf bifurcation alone is studied, as well as   the focus-node bifurcation following  a Hopf bifurcation in paragraph \ref{para:HFN}.

In paragraphs \ref{par:Hc} and \ref{HFN-gdcan}, we assume that there exists a solution of the system approximed by the quasi steady state in the whole domain, so this trajectory has an infinite delay. The used methods are real, and the system has to be smooth (actually only $C^2$).
In paragraphs \ref{Hopf-butée} et \ref{para:HFNb}, we avoid this very special hypothesis. It is here supposed that the system is analytic, and we study the solutions on complex domains. 
Unfortunately, I have not a proof for the main result of this article. But it seems to me that the problem is interesting, and the results are argumented.

We use Nelson's nonstandard terminology (see for example \cite{DfR}). Indeed, almost all sentences can be translated in classical terms, where $\eps$ is considered as a variable and not as a parameter. Often, the translation is given on footnotes.

\section{The delayed Hopf bifurcation }\lb{para:H}
\setcounter{footnote}{0}

The problem is studied and essentially resolved in  \cite{Wal2}. We give here the proofs to improve the results and to fix the ideas for the main paragraph of the article. The main tool is the {\it relief'}s theory of J.L. Callot, explained in \cite{Cal4}.

The studied equation is
\begin{equation}\lb{eqH}\eps\dot{X}=f(t,X,\eps)\end{equation}
where $f$ is analytic on a domain $\cal D$ of $\CC\times\CC^2\times\CC$. 

\paragraph {Hypothesis and notations}
\begin{enumerate}
\item[H1]The function $f$ is analytic. It takes real values when the arguments are real.

\item[H2]The parameter $\eps$ is real, positive, infinitesimal\footnote{In classical terms, we assume that $\eps$ leaves in a small complex sector: $|\eps|$ bounded and $\arg(\eps)\in]-\delta,\delta[$.}.   

\item[H3]There exists an analytic function $\phi$, defined on a complex domain ${\cal D}_t$ 
so that $f(t,\phi(t),0)=0$. The curve $X=\phi(t)$ is called the {\it slow curve} of equation (\ref{eqH}). We assume that the intersection of ${\cal D}_t$ with the real axis is an interval  $]t_m,t_M[$.

\item[H4] Let us denote $\lambda(t)$ and $\mu(t)$ for the eigenvalues of the jacobian matrix $D_Xf$, computed at point $(t,\phi(t),0)$. We assume that , for $t$ real, the signs of the real and imaginary parts are given by the table below~:

\begin{tabular}{c|ccccc}
$t$ &$t_m$&~~~~~~~~~~& a &~~~~~~~~~&$t_M$\\ \hline
$\Re(\lambda(t))    $& & -      & 0 & +      &\\
$\Re(\mu(t))        $& & -      & 0 & +      &\\
$\Im(\lambda(t))    $& & -      & - & -      &\\
$\Im(\mu(t))        $& & +      & + & +     & 
\end{tabular}

Then, when $t$ increases from $t_m$ to $t_M$, the quasi-steady state is first an attractive focus, then a repulsive focus, with a Hopf bifurcation at $t=a$. 

\end{enumerate}
\subsection{Input-output function when there exists a big canard}\lb{par:Hc}

In this section, we assume that there exists a {\it big canard}  $\tilde{X}(t)$ i.e. a solution of equation (\ref{eqH}) such that\footnote{Without nonstandard terminology, a {\it big canard} is a solution of equation (\ref{eqH}) depending on the parameter $\eps$ such that 
$$\forall t\in]t_m,t_M[, \lim_{\eps>0,\eps\rightarrow 0}\tilde{X}(t,\eps)=\phi(t)$$} $\tilde{X}(t)\simeq \phi(t)$ for all $t$ in the $S$-interior of $]t_m,t_M[$. 
We now want to study the others solutions of equation (\ref{eqH}) by comparison with $\tilde{X}$. 

The main tool for that is a sequence of change of unknowm: first, we perform a translation on $X$, depending on $t$ to put the big canard on the axis:
$$X\ =\ \tilde{X}(t)\ +\ Y$$
It gives the system
$$\eps\dot{Y}\ =\ g(t,Y,\eps)\mbox{~~~~with~~~~}
g(t,Y,\eps)=f(t,\tilde{X}(t)+Y,\eps)-f(t,\tilde{X}(t),\eps)$$
The matrix $D_Xf(t,\phi(t),0)$ has two complex conjugate distinct eigenvalues (see hypothesis H4), then there exists a linear transformation $P(t)$ which transforms the jacobian matrix in a canonical form. We define the change of unknown 
$$Y=P(t)Z$$ 
The new system, has the following form (we wrote only the interesting terms):
$$\eps\dot{Z}\ =\ h(t,Z,\eps)\mbox{~~~~with~~~~}
h(t,Z,\eps)=
\left(\begin{array}{cc}\alpha(t)&-\omega(t)\\\omega(t)&\alpha(t)
\end{array}\right) Z+O(\eps)Z+O(Z^2)\mbox{~~~~,~~~~}\lambda(t)=\alpha(t)-i\omega(t)$$
The next change is given by the polar coordinates:
$$Z\ =\ \left(\begin{array}{c}r\cos\theta\\r\sin\theta
\end{array}\right)$$
$$\systemededeux{\eps\dot{r}}{r\left(\alpha(t)+O(\eps)+O(r)\right)}{\eps\dot{\theta}}{\omega(t)+O(\eps)+O(r)}$$
The last one is an exponential microscope\footnote{All the preceeding transformations were regular with respect to $\eps$. This last one is singular at $\eps=0$.}:
$$r\ =\ \exp\left(\frac{\rho}{\eps}\right)$$
\systemedeux{eqHr}{\dot{\rho}}{\alpha(t)+O(\eps)+e^{\frac{\rho}{\eps}}k_1(r,\theta,\eps)}{\eps\dot{\theta}}{\omega(t)+O(\eps)+e^{\frac{\rho}{\eps}}k_2(r,\theta,\eps)}
While $\rho$ is non positive and non infinitesimal, $r$ is exponentially small and the equation (\ref{eqHr}) gives a good approximation of $\rho$ with $\dot{\rho}=\alpha$. When $\rho$ becomes infinitesimal, with a more subtle argument (see \cite{Ben10}) using differential inequations, we can prove that $r$ becomes non infinitesimal. This gives the proposition below:

\begin{proposition}
Let us assume hypothesis H1 to H4 (Hopf bifurcation) for equation (\ref{eqH}). However, we assume that there exists a canard $\tilde{X}(t)$ going along\footnote{A solution $\tilde{X}(t,\eps)$ goes along the slow curve at least on $]t_1,t_2[$ if $$\forall t\in]t_1,t_2[, \lim_{\eps>0,\eps\rightarrow 0}\tilde{X}(t,\eps)=\phi(t)$$} the slow curve at least on $]t_m,t_M[$. Then if $X(t)$ goes along the slow curve exactly\footnote{A solution $\tilde{X}(t,\eps)$ goes along the slow curve exactly on $]t_e,t_s[$ if it goes along the slow curve at least on $]t_e,t_s[$, and if the interval $]t_e,t_s[$ is maximal for this property.} on $]t_e,t_s[$ with $[t_e,t_s]\subset]t_m,t_M[$, then 
$$\int_{t_e}^{t_s}\Re(\lambda(\tau))d\tau\ =\ 0$$

\end{proposition}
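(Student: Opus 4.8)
The plan is to work in the coordinates $(\rho,\theta)$ produced by the chain of changes of unknown described above, and to track the behaviour of $\rho$ along the solution $X(t)$. First I would observe that, since $X(t)$ goes along the slow curve on $]t_e,t_s[$, the corresponding radial variable $r(t)$ is infinitesimal on that interval, hence $\rho(t)\leq 0$ and $\rho(t)$ is non-infinitesimal negative on the $S$-interior of $]t_e,t_s[$ — at least away from the endpoints. In the region where $\rho$ is non-positive and non-infinitesimal, the term $e^{\rho/\eps}k_1(r,\theta,\eps)$ in equation (\ref{eqHr}) is exponentially small, so $\dot\rho = \alpha(t) + o(1)$, and therefore, by integration, $\rho(t) \simeq \rho(t_0) + \int_{t_0}^{t}\alpha(\tau)\,d\tau = \rho(t_0) + \int_{t_0}^t \Re(\lambda(\tau))\,d\tau$ for $t_0, t$ in the interior of the interval where this approximation is valid.

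Next I would analyse the two endpoints. At $t_e$: either $t_e = t_m$, which is excluded by the hypothesis $[t_e,t_s]\subset\,]t_m,t_M[$; or $t_e$ is a genuine "entry" point, meaning the solution enters the infinitesimal neighbourhood of $\phi$ exactly at $t_e$. Since $\Re(\lambda)<0$ for $t<a$, the flow is attracting there, and a standard argument (the relief/entry-exit estimate, or the differential-inequality argument of \cite{Ben10}) shows that $\rho(t_e)\simeq 0$: the solution reaches $\rho \simeq 0$ exactly at the entry time. Symmetrically, at $t_s$: the solution leaves the neighbourhood of $\phi$ exactly at $t_s$, and since for this to be a genuine exit we need $\Re(\lambda)>0$ near $t_s$, the escape is governed by $\rho$ crossing back up through $0$; the more subtle argument cited after (\ref{eqHr}) (again \cite{Ben10}, using differential inequations to handle the region where $\rho$ becomes infinitesimal and $r$ becomes non-infinitesimal) yields $\rho(t_s)\simeq 0$ as well. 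Combining with the integrated relation gives $0 \simeq \rho(t_s) - \rho(t_e) \simeq \int_{t_e}^{t_s}\Re(\lambda(\tau))\,d\tau$, and since the left side is exactly an integral of a standard continuous function over a standard interval (so the quantity is standard), the infinitesimal equality forces the exact equality $\int_{t_e}^{t_s}\Re(\lambda(\tau))\,d\tau = 0$.

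The main obstacle is the rigorous control at the two endpoints, i.e. justifying $\rho(t_e)\simeq 0$ and $\rho(t_s)\simeq 0$. Away from the endpoints everything is a clean application of Gronwall-type estimates to $\dot\rho = \alpha + (\text{exponentially small})$, but near $t_e$ and $t_s$ the exponentially small term $e^{\rho/\eps}k_i$ ceases to be negligible precisely when $\rho$ becomes infinitesimal, and one must show that the solution cannot linger: it must cross $\rho\simeq 0$ in an infinitesimal time-window around the true entry/exit instants. This is exactly where I would invoke the differential-inequality machinery of \cite{Ben10} rather than re-derive it, bounding $\rho$ above and below by solutions of scalar comparison equations whose entry/exit behaviour is explicit. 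One should also check the non-degeneracy needed for $t_e, t_s$ to be in the hyperbolic regime of $\Re(\lambda)$ — but the hypothesis $[t_e,t_s]\subset\,]t_m,t_M[$ together with the sign table in H4 and the fact that $\int \Re(\lambda)$ must vanish (so $t_e < a < t_s$) takes care of this a posteriori.
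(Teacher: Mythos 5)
Your proposal is correct and takes essentially the same route as the paper: the paper's justification consists precisely of the chain of changes of unknown (translation by the big canard, reduction to canonical form, polar coordinates, exponential microscope) leading to $\dot{\rho}=\alpha(t)+O(\eps)+e^{\rho/\eps}k_1$, the integration of $\dot{\rho}\simeq\Re(\lambda(t))$ while $\rho$ is negative non-infinitesimal, and the appeal to the differential-inequality argument of \cite{Ben10} for the endpoint behaviour $\rho(t_e)\simeq\rho(t_s)\simeq 0$. Your additional remarks on standardness of the integral and on the position $t_e<a<t_s$ are consistent with (and slightly more explicit than) the paper's treatment.
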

The input-output relation (between $t_e$ and $t_s$) is defined by $\int_{t_e}^{t_s}\Re(\lambda(\tau))d\tau\ =\ 0$. It is described by its graph (see figure \ref{fig:relHc}). In this case, this relation is a function.
\begin{figure}[ht]
\begin{center}
\includegraphics[height=3cm,width=5cm]{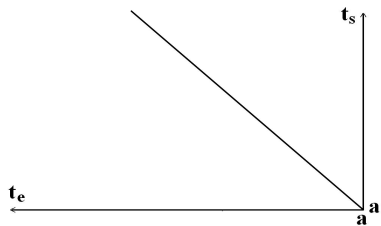}
\end{center}
\caption{The input-output relation for equation (\ref{eqHx}) when there exists a big canard.}\lb{fig:relHc}
\end{figure}

\subsection{The bump and the anti-bump}\lb{Hopf-butée}
\setcounter{footnote}{0}

In this paragraph, $t$ becomes complex, in the domain ${\cal D}_t$. We assume that for all $t$ in ${\cal D}_t$, the two eigenvalues $\lambda(t)$ and $\mu(t)$ are distinct. It is a necessary condition to apply Callot's theory of reliefs. 

We define the reliefs $R_\lambda$ and $R_\mu$ by:

$$F_\lambda(t)\ =\ \int_a^t\lambda(t)dt\mbox{~~~~~~,~~~~~~}
R_\lambda(t)=\Re(F_\lambda(t))$$
$$F_\mu(t)\ =\ \int_a^t\mu(t)dt\mbox{~~~~~~,~~~~~~}
R_\mu(t)=\Re(F_\mu(t))$$

It is easy to see that $\lambda(\overline{t})=\overline{\mu(t)}$, and  $F_\lambda(\overline{t})=\overline{F_\mu(t)}$, then $R_\lambda(\overline{t})\ =\ R_\mu(t)$.
The two functions $R_\lambda$ and $R_\mu$ coincide on the real axis. We will denote $R(t)$.

\begin{definition} \lb{def:chemindescendant}
We say that a path $\gamma: s\in[0,1]\mapsto {\cal D}_t$ goes down the relief  $R_\lambda$ if and only if  $\frac{d}{ds}R_\lambda(\gamma(s))<0$ for all $s$ in $[0,1]$.  
\end{definition}

\begin{definition} \lb{def:domainedescendant}
Let us give a point $t_e$ such that $(t_e,\phi(t_e),0)\in{\cal D}$.  We say that ${\cal D}_t$ is a domain {\em  below}  $t_e$ if and only if for all $t$ in the  $S$-interior of ${\cal D}_t$, there exist two paths in ${\cal D}_t$, from $t_e$ to $t$, the first one goes down the relief $R_\lambda$ and the second one down $R_\mu$. 
\end{definition}

\begin{theorem}[Callot]\lb{th:Callot} Let us assume that ${\cal D}_t$ is a domain below $t_e$. A solution $X(t)$ of equation (\ref{eqH}) with an initial condition $X(t_e)$ infinitesimally close to $\phi(t_e)$ is defined at least on the  $S$-interior of ${\cal D}_t$ where it is infinitesimally close to $\phi(t)$.
\end{theorem}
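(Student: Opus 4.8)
The plan is to set up the exponential microscope as in the previous section and then use a WKB-type (relief) estimate to control the solution along descending paths. First I would reduce, exactly as in the derivation of (\ref{eqHr}), to a system in the variables $(\rho,\theta)$ obtained from $(t,X)$ by: translating $X$ by $\phi(t)$ (here there is no big canard available, so one translates by $\phi$ itself and keeps track of the $O(\eps)$ error coming from $f(t,\phi(t),\eps)\neq 0$ in general), diagonalising the Jacobian with a holomorphic $P(t)$, passing to ``polar'' coordinates adapted to the complex eigenvalue $\lambda(t)=\alpha(t)-i\omega(t)$, and finally applying the microscope $r=\exp(\rho/\eps)$. Since $t$ is now complex one must be careful: the two natural complex logarithms attached to $\lambda$ and to $\mu=\overline\lambda$ are genuinely different, which is exactly why the hypothesis asks for \emph{two} paths, one descending $R_\lambda$ and one descending $R_\mu$. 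The point $(t_e,\phi(t_e),0)\in{\cal D}$ together with $X(t_e)\simeq\phi(t_e)$ translates into $\rho(t_e)$ non‑positive and non‑infinitesimal (in each of the two charts).

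The core step is the following a priori estimate. Along a path $\gamma$ in ${\cal D}_t$ that goes down $R_\lambda$, one has $\frac{d}{ds}\Re F_\lambda(\gamma(s))<0$, hence $\Re F_\lambda$ decreases strictly from its value at $t_e$. As long as $\rho$ stays non‑positive and non‑infinitesimal, the term $e^{\rho/\eps}k_1$ in (\ref{eqHr}) is exponentially small, so along $\gamma$ the equation for $\rho$ is, up to an infinitesimal, $\dot\rho=\Re\lambda$ (reading the real part because we move along a real parameter $s$ on a complex $t$-path), i.e. $\rho(\gamma(s))\simeq \rho(t_e)+\big(R_\lambda(\gamma(s))-R_\lambda(t_e)\big)$. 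Because the path descends, the bracket is negative, so $\rho$ stays negative; using the permanence principle one shows it stays non‑infinitesimal up to $\gamma(1)=t$. Doing the same computation along the path that descends $R_\mu$ controls the conjugate chart. Combining the two, $r=\exp(\rho/\eps)$ is exponentially small at $t$, which means precisely $X(t)\simeq\phi(t)$, and in particular the solution does not blow up, so it is defined at $t$. Since this works for every $t$ in the $S$-interior of ${\cal D}_t$, the solution is defined and infinitesimally close to $\phi$ there.

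A couple of points need care. One must check that the reduction to (\ref{eqHr}) is valid uniformly on a standard compact subdomain of the $S$-interior, so that the constants hidden in the $O(\eps)$ and $O(r)$ terms are themselves standard; this is where analyticity of $f$ on ${\cal D}$ and distinctness of $\lambda(t),\mu(t)$ on all of ${\cal D}_t$ (needed for a holomorphic diagonalising $P(t)$) are used. One must also handle the passage from ``$\rho$ non‑positive non‑infinitesimal'' to ``$r$ non‑infinitesimal'' near the real axis, i.e. near where $R_\lambda$ has its maximum along real $t$: here $\rho$ may become infinitesimal, and one invokes, as in \cite{Ben10}, the differential‑inequality argument to conclude that $r$ then becomes non‑infinitesimal in a controlled way — but that regime lies outside a domain \emph{below} $t_e$, so inside such a domain the descending‑path hypothesis keeps us safely in the exponentially‑small regime.

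The main obstacle is the first one: making the change‑of‑variables reduction and the accompanying error estimates genuinely uniform on complex subdomains, and matching the two logarithmic charts along the two descending paths so that the conclusion $r\simeq 0$ is obtained from a single well‑defined function $R$. Once that bookkeeping is in place, the descent inequality in Definition~\ref{def:chemindescendant} does all the work and the rest is the permanence principle.
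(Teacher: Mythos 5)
First, a point of reference: the paper does not prove Theorem \ref{th:Callot} at all --- it is quoted as Callot's theorem and the proof is deferred to \cite{Cal4} --- so there is no in-paper argument to compare yours against. Your plan follows the standard relief strategy and identifies the correct mechanism (the strict descent of $R_\lambda$ and $R_\mu$ along the two paths is what keeps the solution exponentially close to the slow curve), but as written it has two genuine gaps.

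The first is the reduction itself. The chain of changes of variables leading to (\ref{eqHr}) --- polar coordinates $Z=(r\cos\theta,\,r\sin\theta)$ followed by $r=e^{\rho/\eps}$ --- is a real-variable device; once $t$ is complex the components of $Z$ are complex and there is no single real pair $(r,\theta)$ attached to the solution. The correct complex version is to diagonalize with the holomorphic $P(t)$ (available since $\lambda\neq\mu$ on ${\cal D}_t$), obtaining two scalar equations $\eps\dot z_1=\lambda(t)z_1+\cdots$ and $\eps\dot z_2=\mu(t)z_2+\cdots$, and to apply one exponential microscope per component, $\rho_k=\eps\log|z_k|$, estimating $|z_1|$ by Duhamel--Gronwall along the path descending $R_\lambda$ and $|z_2|$ along the path descending $R_\mu$ (this is also where the inhomogeneous $O(\eps)$ term is absorbed, since the Duhamel integral along a descending path stays $O(\eps)$). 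Your remark about ``two logarithmic charts'' shows you see this, but the plan as stated imports the single-$\rho$ polar reduction of \S\ref{par:Hc}, which does not exist over $\CC$. The second gap is the one you yourself flag as the main obstacle, and it is not mere bookkeeping: because the nonlinear terms $O(Z^2)$ couple $z_1$ and $z_2$, the Gronwall estimate for $z_1$ along $\gamma_\lambda$ already requires $z_2$ to be known small \emph{along $\gamma_\lambda$}, whereas your a priori control of $z_2$ is obtained only along the other path $\gamma_\mu$. Closing this loop requires an exhaustion of the $S$-interior by subdomains each below $t_e$, together with a continuity/permanence argument showing the solution remains defined and infinitesimal as the subdomain grows; without it the two one-path estimates never combine into a statement at a single point $t$. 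A minor further point: $X(t_e)\simeq\phi(t_e)$ only gives $r(t_e)\simeq 0$, i.e.\ $\rho(t_e)$ at most infinitesimally positive, not ``negative non-infinitesimal''; what makes $\rho(t)$ appreciably negative at interior points is the non-infinitesimal drop of the relief between $t_e$ and the $S$-interior, not the initial value of $\rho$.
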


Let us apply this theorem to the following example, chosen as the typical example satisfying hypothesis H1 to H4 (Hopf bifurcation).
\systemedeux{eqHx}{\eps\dot{x}}{tx+y+\eps c_1}{\eps\dot{y}}{-x+ty+\eps c_2}
The eigenvalues are $\lambda=t-i$ et $\mu=t+i$. The level curves of the two reliefs $R_\lambda(t)=\ud (t-i)^2$ and $R_\mu(t)=\ud (t+i)^2$ are drawn on figure \ref{fig:reliefH}.

\begin{figure}[ht]
\begin{center}
\includegraphics[height=4cm,width=7cm]{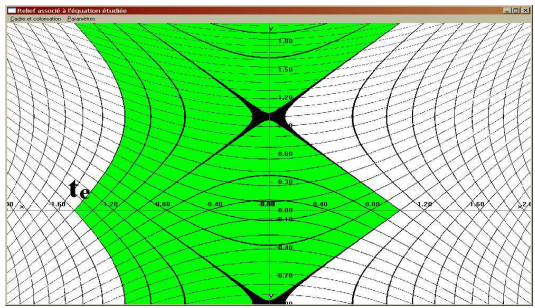}
\end{center}
\caption{The level curves of the two reliefs of equation (\ref{eqHx}), and a domain below $t_e$}\lb{fig:reliefH}
\end{figure}

Generically\footnote{I do not know the exact generic hypothesis. We have to combine the constraints given by the surstability theory of \cite{BFSW97} and the fact that the equation (\ref{eqH}) is real} there is no surstability at point $t=i$  (see \cite{BFSW97} for the definition of surstability). Consequently, we have the following results for all equations such that the  reliefs are on the same type as those of figure  \ref{fig:reliefH}:

\begin{definition}\lb{def:butee}
Let us give $t_c$ a point\footnote{In some cases, it is possible that $t_c$ is infinite. For   $\eps\dot{X}=
\left(\begin{array}{cc}\sin t&\cos t\\-\cos t&\sin t
\end{array}\right) X+O(X^2)+O(\eps)$ we have $t_c=+i\infty$.
} where the eigenvalue vanishes: $\lambda(t_c)=0$. The value of the relief at point $t_c$ is a critical value of the relief $R_\lambda$.  The {\em bump}\footnote{The name "bump" is a translation of the french name "but\'ee"
} is the real number $t^*$ bigger than  $a$, minimal such that $R_\lambda(t^*)$ is a critical value.  The {\em anti-bump} is the real number $t^{**}$ smaller than $a$, maximal such that $R_\lambda(t^{**})$ is a critical value.
\end{definition}

For equation (\ref{eqHx}), the bump is $t^*=1$ and the anti-bump $t^{**}=-1$.

\begin{theorem} A trajectory of equation (\ref{eqH}) can go along the slow curve  $X=\phi(t)$ exactly on $]t_e,t_s[$ if and only if one of the following is verified:
 $$t_e<t^{**}\mbox{~~~~and~~~~}t_s=t^*$$
 $$t_e=t^{**}\mbox{~~~~and~~~~}t_s>t^*$$
 $$t^{**}<t_e<a\mbox{~~~~and~~~~}a<t_s<t^*\mbox{~~~~and~~~~}R(t_e)=R(t_s)$$
\end{theorem}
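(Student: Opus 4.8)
The plan is to reduce the general equation (\ref{eqH}) to the model situation via the chain of changes of unknown set up in paragraph~\ref{par:Hc} (translation onto the big canard, diagonalisation $P(t)$, polar coordinates, exponential microscope), so that the dynamics near the slow curve is governed by $\dot\rho=\alpha(t)+O(\eps)+(\text{exponentially small corrections})$, i.e.\ by the relief $R$ through the primitive $F_\lambda$. The function a trajectory ``goes along the slow curve exactly on $]t_e,t_s[$'' will then be read off as: $\rho$ stays non-positive and non-infinitesimal on $]t_e,t_s[$ and becomes infinitesimal (hence $r$ non-infinitesimal, the trajectory leaving the slow curve) at $t_s$, and symmetrically $\rho$ was non-infinitesimal coming from $t_e$. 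Concretely I would show $\rho(t)\simeq \rho(t_e)+\int_{t_e}^t\alpha(\tau)d\tau = \rho(t_e)+R(t)-R(t_e)$ as long as this quantity is negative and non-infinitesimal. The key point, as in Proposition~1 and in \cite{Ben10}, is the differential-inequality argument controlling the exponentially small terms once $\rho$ reaches the infinitesimal regime.

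Next I would establish the three cases by analysing the two constraints separately: the \emph{entrance} constraint (which $t_e$ are admissible as exact entrance points) and the \emph{exit} constraint (which $t_s$). For the exit: Callot's Theorem~\ref{th:Callot} applied to a domain below $t_e$ shows the trajectory stays glued to $\phi$ on the whole $S$-interior of that domain; the maximal such domain, for reliefs of the type of Figure~\ref{fig:reliefH} and \emph{in the absence of surstability at $t_c$}, is bounded on the right precisely by the first real point $t^*>a$ at which $R(t^*)$ is a critical value of $R_\lambda$ — beyond $t^*$ no descending path for both $R_\lambda$ and $R_\mu$ exists. Hence any trajectory infinitesimally close to $\phi$ on an interval reaching past $a$ must in fact stay close at least up to $t^*$, giving $t_s\ge t^*$; and it cannot stay close beyond $t^*$ unless it entered at (or before) the ``symmetric'' antibump point, which is the content of the first two lines. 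Dually, running time backwards (the backward flow sees $-\alpha$, i.e.\ the reversed relief), the antibump $t^{**}$ plays for $t_e$ the role $t^*$ plays for $t_s$: a trajectory glued to $\phi$ on $]t_e,t_s[$ with $t_s$ finite $<t^*$ forces $t_e>t^{**}$, and the gluing from $t_e$ to $t_s$ across the Hopf point $t=a$ is possible exactly when $\rho(t_e)=\rho(t_s)$ up to infinitesimals, i.e.\ when $R(t_e)=R(t_s)$ — this is the third line. Finally the boundary cases $t_e<t^{**},\,t_s=t^*$ and $t_e=t^{**},\,t_s>t^*$ are obtained by letting $t_e$ drop below $t^{**}$ (then the entrance relief value $R(t_e)<R(t^{**})$ is below the first critical level on that side, so entrance is ``free'' and the exit is pinned at $t^*$) and symmetrically for the second.

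I would organise the write-up as: (i) the microscope reduction and the statement $\rho(t)\simeq \rho(t_e)+R(t)-R(t_e)$ in the non-infinitesimal regime, with the \cite{Ben10}-type inequality at the transition; (ii) ``only if'': from admissibility of $]t_e,t_s[$ derive, via Callot on the forward and backward maximal domains below $t_e$ resp.\ above $t_s$, that $t_s\ge t^*$ or $t_e\le t^{**}$, and in the remaining range the equality $R(t_e)=R(t_s)$; (iii) ``if'': for each of the three configurations construct the domain ${\cal D}_t$ below the prescribed $t_e$ whose $S$-interior reaches the prescribed $t_s$ (reliefs of Figure~\ref{fig:reliefH} type make this explicit by drawing the two families of descending paths), and check maximality, i.e.\ that the trajectory leaves $\phi$ immediately after $t_s$ — again via the microscope, $\rho$ becoming infinitesimal there.

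The main obstacle I expect is step~(ii), specifically the claim that the \emph{maximal} domain below $t_e$ is cut off exactly at the bump $t^*$ rather than somewhere earlier. This is where the hypothesis ``no surstability at $t_c$'' is essential (and, as the author's footnote concedes, the precise generic hypothesis combining \cite{BFSW97} with reality of (\ref{eqH}) is not pinned down): without it, a surstable solution could extend the gluing past $t^*$, breaking the clean trichotomy. So I would either invoke the no-surstability hypothesis as a black box from \cite{BFSW97} (the honest option here), or, for the model (\ref{eqHx}) where $R_\lambda(t)=\ud(t-i)^2$ is completely explicit, verify directly that the level set $R(t)=R(1)$ through $t=1$ together with the descending-path condition for both reliefs leaves no room to the right of $t^*=1$, and then transport this to the general case by the structural stability of the relief picture under the genericity assumption.
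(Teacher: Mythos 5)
The paper offers no proof of this theorem: it is asserted as a direct consequence of Callot's theorem \ref{th:Callot} together with the generic absence of surstability at $t_c$ (``Consequently, we have the following results\dots''), the details being delegated to \cite{Wal2}, \cite{Cal4} and \cite{BFSW97}. So there is nothing to compare line by line; your sketch follows exactly the route the paper intends --- descending domains and Callot's theorem for the persistence of the gluing, the no-surstability genericity for the fact that the trajectory actually leaves at the bump --- and you are right to single out the maximality half as the real content that is only black-boxed, by you and by the author alike (see his footnote conceding that the exact generic hypothesis is not pinned down).

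One step of your plan, however, would fail as written. Your step (i) imports the chain of changes of unknown of paragraph \ref{par:Hc}, whose very first move is the translation $X=\tilde X(t)+Y$ onto a big canard. In the situation of the present theorem there is generically \emph{no} big canard --- that is precisely what the existence of a finite bump expresses --- so that reduction is unavailable; and if you translate onto $\phi(t)$ instead, the equation for $Y$ acquires an $O(\eps)$ inhomogeneous term (since $f(t,\phi(t),\eps)=O(\eps)$, not $0$), which destroys the exponential microscope exactly when $r$ becomes comparable to it. That is the mechanism by which the delay is stopped, not a technicality to be absorbed into the $O(\eps)$ of equation (\ref{eqHr}). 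The repair is either to translate onto the distinguished solutions $X_\pm$ (the Fenichel manifolds, which do exist near $t_m$ and $t_M$) and control the gap between the studied trajectory and $X_+$, or to stay entirely within the complex-analytic framework: Callot's theorem applied to the maximal domain below $t_e$, whose real trace must be computed from the reliefs of figure \ref{fig:reliefH} (for the model, $R(t)=\ud(t^2-1)$ on the real axis, $R_c=0$, $t^*=1$, $t^{**}=-1$, and any descending path joining the two halves of the real axis must pass under the pass at $t_c=i$, which is what pins the trace at $t^*$), gives the ``at least'' half of each case, while the absence of surstability gives the ``exactly'' half. With that correction, your trichotomy and the time-reversal argument identifying $t^{**}$ as the backward bump are sound, at the same (incomplete) level of rigour as the paper itself.
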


This theorem is illustrated by the graph of the input-output relation, drawn on figure \ref{fig:relH}.

\begin{figure}[ht]
\begin{center}
\includegraphics[height=3cm,width=5cm]{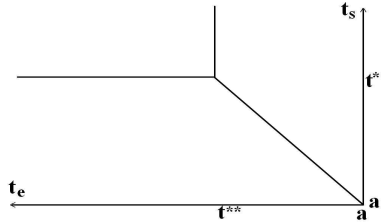}
\end{center}
\caption{The input-output  relation for equation (\ref{eqHx})}\lb{fig:relH}
\end{figure}

\section{Delayed Hopf bifurcation followed by a focus-node bifurcation}\lb{para:HFN}
\setcounter{footnote}{0}
The studied equation is
\begin{equation}\lb{eqX}\eps\dot{X}=f(t,X,\eps)\end{equation}
where $f$ is analytic on a domain $\cal D$ of $\CC\times\CC^2\times\CC$, and satisfies the following hypothesis:

\paragraph {Hypothesis and notations}
\begin{enumerate}
\item [HFN1] The analytic function $f$ takes real values when the arguments are real.

\item [HFN2] The parameter $\eps$ is real, positive, infinitesimal.   

\item [HFN3] There exists an analytic function $\phi$, defined on a complex domain ${\cal D}_t$ 
such that $f(t,\phi(t),0)=0$. The curve $X=\phi(t)$ is called the {\it slow curve} of equation \ref{eqH}. We assume that the intersection of ${\cal D}_t$ with the real axis is an interval  $]t_m,t_M[$.

\item [HFN4] Let us denote $\lambda(t)$ and $\mu(t)$ for the eigenvalues of the jacobian matrix $D_Xf$, computed at point $(t,\phi(t),0)$. We assume that , for $t$ real, the signs of the real and imaginary parts are given by the table below~:

\begin{tabular}{c|ccccc}
$t$ &$t_m$~~~~~~~~~~& a &~~~~~~~~~& b &~~~~~~~~~~$t_M$ \\ \hline
$\Re(\lambda(t))    $ & -      & 0 & +     & + & + \\
$\Re(\mu(t))        $ & -      & 0 & +     & + & + \\
$\Im(\lambda(t))    $ & -      & - & -     & 0 & 0 \\
$\Im(\mu(t))        $ & +      & + & +     & 0 & 0 
\end{tabular}

Then, when $t$ increases on the real interval $]t_m,t_M[$, we have succesively an attractive focus, a Hopf bifurcation at $t=a$, a repulsive focus, a focus-node bifurcation at $t=b$ and a repulsive node.
At point $t=b$, the two eigenvalues coincide. We assume that $\lambda(t)=\mu(t)$ only at point $b$.  Actually, in the complex plane, the two eigenvalues are the two determinations of a multiform function defined on a Riemann surface with a square root singularity at point $b$.

However, there is a symmetry: if the function  $\sqrt{~}$ is defined with a cut-off on the positive real axis, it satisfies $\overline{\sqrt{s}}=-\sqrt{\overline{s}}$ and we then have
$$\mu(\overline{t})\ =\ \overline{\lambda(t)}$$

\item [HFN5] For the same reason, the two reliefs 
$$R_\lambda(t)=\Re\left(\int_a^t\lambda(t)dt\right) \mbox{~~~~~~and~~~~~~} R_\mu(t)=\Re\left(\int_a^t\mu(t)dt\right)$$ are the two determinations of a multiform function with a square root singularity at point $t=b$. However, there is a symmetry: if the function  $\sqrt{~}$ is defined with a cut-off on the positive real axis, it satisfies $\overline{\sqrt{s}}=-\sqrt{\overline{s}}$ and we have then: $R_\mu(t)\ =\ R_\lambda(\overline{t})$ except on the cut-off half line $[b,+\infty[$. 
For real $t>b$, we choose determinations of square root such that $\lambda(t)<\mu(t)$. We assume that $R_\lambda$ has a unique critical point with critical value $R_c$. We assume that $R_\lambda(b)<R_c$. An example is given and studied in paragraph \ref{par:relief}. 
\end{enumerate}
\subsection{Input-output function when there exists a big canard}\lb{HFN-gdcan}
We assume now that there exists a {\it big canard}  $\tilde{X}(t)$ i.e. a solution of equation (\ref{eqH}) such that  $\tilde{X}(t)\simeq \phi(t)$ for all $t$ in the $S$-interior of $]t_m,t_M[$. 
The study below is similar to paragraph \ref{par:Hc}. The added difficulty is the coincidence of the two eigenvalues at point $b$ which do not allow to diagonalize the linear part. 

The first change of unknown is  $X=\tilde{X}(t)+Z$ which moves the big canard on the axis $X=0$:
$$\eps\dot{Z}\ =\ 
A(t) Z+O(\eps)Z+O(Z^2)\mbox{~~~~,~~~~}A(t)=D_Xf(t,\phi(t),0)$$
Let us denote  $\left(\begin{array}{cc}\alpha(t)&\beta(t)\\\gamma(t)&\delta(t)
\end{array}\right)$ the coefficients of the matrix $A(t)$. As in paragraph  \ref{par:Hc}, the change of unknowns 
$$Z\ =\ \left(\begin{array}{c}r\cos\theta\\r\sin\theta
\end{array}\right)\mbox{~~~~~~,~~~~~~}
r\ =\ \exp\left(\frac{\rho}{\eps}\right)$$
gives the new system:
\systemedeux{eqr}{\dot{\rho}}{\alpha(t)\cos^2\theta+(\beta(t)+\gamma(t))\cos\theta\sin\theta+\delta(t)\sin^2\theta+O(\eps)+e^{\frac{\rho}{\eps}}k_1(r,\theta,\eps)}
{\eps\dot{\theta}}{\gamma(t)\cos^2\theta+(\delta(t)-\alpha(t))\cos\theta\sin\theta-\beta(t)\sin^2\theta+O(\eps)+e^{\frac{\rho}{\eps}}k_2(r,\theta,\eps)}
For nonpositive $\rho$ (more precisely, for infinitesimal $r$), the second equation is a slow-fast equation. Its slow curve is given by
$$\theta\ =\ \arctan \left( {\frac {\delta(t)-\alpha(t)\pm\sqrt {{\alpha(t)}^{2}-2\,\alpha(t)\delta(t)+{\delta(t)}^{2}+4\,\beta(t)\gamma(t)}}
{2\beta(t)}} \right) $$
It has two branches when $\lambda$ and $\mu$ are reals, one is attractive, the other is repulsive: see figure \ref{fig:rt}.
\begin{figure}[htbp]
\begin{center}
\includegraphics[height=7cm,width=7cm]{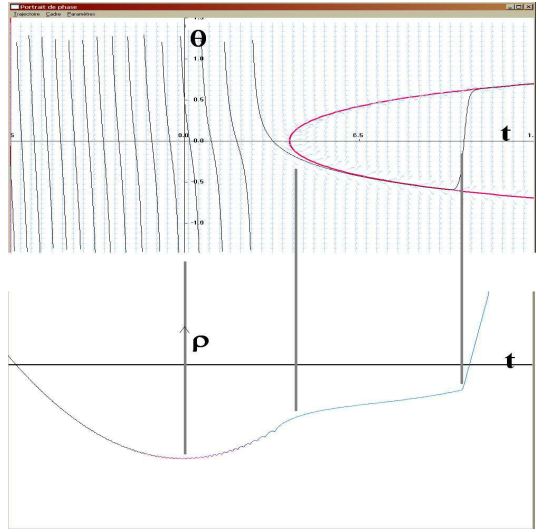}
\end{center}
\caption[]{One of the trajectories of system 
$0.002\dot{X}=\left(\begin{array}{cc}t&1\\t-0.3&t \end{array}\right)X$
drawn with the variables $(\theta,\rho)$. The slow curve is also drawn}\lb{fig:rt}
\end{figure}

When $\theta$ goes along a branch of the slow curve, (and when $r$ is infinitesimal), an easy computation shows that $\dot{\rho}$ is infinitely close to one of the eigenvalues $\lambda$ or $\mu$. The repulsive branch corresponds to the smallest eigenvalue (which is real positive). When $t<b$, the angle $\theta$ moves infinitely fast, and an averaging procedure is needed to evaluate the variation of $\rho$:
$$\langle\dot{\rho}\rangle\ =\ \frac{\int_{\theta_1}^{\theta_1+2\pi}\frac{\dot{\rho}}{\dot{\theta}}d\theta}
{\int_{\theta_1}^{\theta_1+2\pi}\frac{1}{\dot{\theta}}d\theta}$$
An easy computation shows now that, in the $S$-interior of the domain  $t<b$, $\rho<0$, we have
$$\langle\dot{\rho}\rangle\ \simeq\ \frac{\alpha(t)+\delta(t)}{2}\ =\ \Re(\lambda(t))=\Re(\mu(t))$$

Let us give an initial condition $(t,\theta)$ between the two branches of the slow curve and $\rho$ negative non infinitesimal (in the example, we can take $t=0.8$, $\theta=0$, $\rho=-0.03$). For increasing $t$, the curve $(t,\theta(t))$ goes along the attractive branch of the slow curve, while $\rho$ believes negative non infinitesimal. For decreasing $t$, the solution goes along the repulsive branch, then $\theta$ moves infinitely fast while $\rho$ believes negative non infinitesimal. Consequently, we know the variation of $\rho(t)$ (see figure \ref{fig:rt}). As in paragraph \ref{par:Hc}, a  more subtle argument is needed to prove that when $\rho$ becomes infinitesimal, the variable $r$ becomes non infinitesimal and the trajectory $X$ leaves the neighborhood of the slow curve.

From this study, all the behaviours of $\rho(t)$ are known, depending on the initial condition. They are drawn on figure \ref{fig:rhobis}.

\begin{figure}[ht]
\begin{center}
\includegraphics[height=4cm,width=8cm]{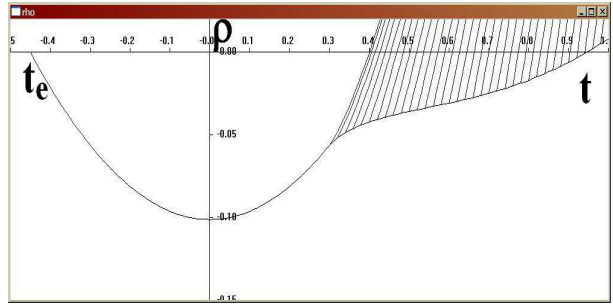}
\end{center}
\caption[]{The possible behaviours of $\rho(t)$.}\lb{fig:rhobis}
\end{figure}

\begin{proposition}\lb{prop:HFNcan}
Let us give an equation of type (\ref{eqx}) with hypothesis HFN1 to HFN5. Assume also that there exists a big canard $\tilde{X}(t)$ going along the slow curve on the whole interval $]t_m,t_M[$.
If a trajectory $X(t)$ goes along the slow curve exactly on an interval $]t_e,t_s[$ with $[t_e,t_s]\subset ]t_m,t_M[$, then
$$\int_{t_e}^{t_s}\Re(\lambda(\tau))d\tau\ \leq\ 0\ \leq\ \int_{t_e}^{t_s}\Re(\mu(\tau))d\tau$$

Conversely, if the inequalities above are satisfied, there exists a trajectory going along the slow curve exactly on $]t_e,t_s[$.
\end{proposition}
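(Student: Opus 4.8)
We work with the reduced equation (\ref{eqr}), obtained by the change $X=\tilde X(t)+Z$ that puts the big canard on the axis $Z=0$; since $\tilde X\simeq\phi$, the slow curve is then infinitely close to $Z=0$. In these coordinates ``$X(t)$ goes along the slow curve'' means exactly that $\rho(t)$ is negative and non infinitesimal: then $r=e^{\rho/\eps}$ is exponentially small, the terms $e^{\rho/\eps}k_i$ of (\ref{eqr}) are negligible, and $X=\tilde X(t)+Z\simeq\phi(t)$; conversely, as soon as $\rho$ reaches $0$ the variable $r$ becomes appreciable and $X$ leaves the neighbourhood of $\phi$ --- this passage of $\rho$ through $0$ is the ``subtle argument'' already invoked in paragraph \ref{par:Hc}, proved by differential inequalities as in \cite{Ben10}. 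Hence a trajectory going along the slow curve exactly on $]t_e,t_s[$, with $[t_e,t_s]\subset]t_m,t_M[$, is characterised by: $\rho$ appreciably negative on the $S$-interior of $]t_e,t_s[$, and $\rho(t_e)\simeq\rho(t_s)\simeq 0$.

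Everything then rests on the control of $\dot\rho$ obtained just before the statement. For $t<b$ the angle $\theta$ turns infinitely fast and averaging gives $\dot\rho\simeq\Re(\lambda(t))=\Re(\mu(t))$; for $t>b$ the $\theta$-equation of (\ref{eqr}) is a scalar slow--fast equation whose slow curve has an attractive and a repulsive branch, and during non infinitesimal time a trajectory stays infinitely close to one of them (except for an infinitesimal lapse when it jumps from the repulsive to the attractive one), with $\dot\rho\simeq\mu(t)$ on the attractive branch and $\dot\rho\simeq\lambda(t)$ on the repulsive one. Since on $]t_m,b[$ the two bounds coincide and on $]b,t_M[$ both eigenvalues are real with $\lambda\leq\mu$, and since $\dot\rho$ stays limited, one has $\Re(\lambda(t))\leq\dot\rho\leq\Re(\mu(t))$ outside a set of infinitesimal measure and up to an infinitesimal. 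Integrating along the trajectory, $0\simeq\rho(t_s)-\rho(t_e)=\int_{t_e}^{t_s}\dot\rho\,d\tau$, whence $\int_{t_e}^{t_s}\Re(\lambda(\tau))d\tau\leq 0\leq\int_{t_e}^{t_s}\Re(\mu(\tau))d\tau$ after taking standard parts. This proves the direct implication.

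For the converse, fix $t_e<t_s$ in $]t_m,t_M[$ satisfying the inequalities. If $t_s\leq b$ they force $\int_{t_e}^{t_s}\Re(\lambda(\tau))d\tau=0$ and one is back to the situation of paragraph \ref{par:Hc}; assume then $t_e<a<b<t_s$, the remaining cases being similar. We construct the solution by a shooting argument on two data: the initial point at $t=t_e$, chosen with $\rho(t_e)\simeq 0$ (then $\rho$ automatically decreases on $]t_e,a[$, where $\Re(\lambda)<0$), and the length $t_c-b$ of the canard performed by $\theta$ along the repulsive branch after $b$. For a trajectory leaving this branch at time $t_c\in[b,t_s]$ one has
$$\rho(t_s)-\rho(t_e)\ \simeq\ \int_{t_e}^{b}\Re(\lambda(\tau))\,d\tau+\int_{b}^{t_c}\lambda(\tau)\,d\tau+\int_{t_c}^{t_s}\mu(\tau)\,d\tau,$$
and the right-hand side is a continuous decreasing function of $t_c$, equal to $\int_{t_e}^{t_s}\Re(\mu(\tau))d\tau\geq 0$ at $t_c=b$ and to $\int_{t_e}^{t_s}\Re(\lambda(\tau))d\tau\leq 0$ at $t_c=t_s$; hence it vanishes for a unique $t_c^{*}\in[b,t_s]$. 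The corresponding solution has $\rho(t_e)\simeq 0$, $\rho$ decreasing on $]t_e,a[$ and increasing on $]a,t_s[$ (where $\dot\rho$ is infinitely close to $\Re(\lambda)>0$, then to $\lambda>0$ or $\mu>0$), and $\rho(t_s)\simeq 0$; so $\rho$ is appreciably negative on the $S$-interior of $]t_e,t_s[$ and reaches $0$ only at $t_s$, which by the subtle argument means that $X$ goes along the slow curve exactly on $]t_e,t_s[$.

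The delicate point is this last construction: one must know that near $t=b$, where the slow curve of the $\theta$-equation is born with a square-root singularity, there exist solutions following the repulsive branch for every prescribed length $t_c-b\in[0,t_s-b]$, depending continuously on the initial data so that the intermediate-value argument applies --- a canard phenomenon for the scalar $\theta$-equation. Together with the differential-inequality control of the entry at $t_e$ and of the exit at $t_s$, this is the only non routine point; the averaging estimate for $t<b$ and the identity $\Re(\lambda)=\Re(\mu)$ on $]t_m,b[$ are elementary.
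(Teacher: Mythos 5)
Your proof is correct and follows essentially the same route as the paper, whose own justification for this proposition is exactly the preceding reduction to the $(\rho,\theta)$ system (\ref{eqr}), the averaging estimate $\langle\dot\rho\rangle\simeq\Re(\lambda)=\Re(\mu)$ for $t<b$, and the identification $\dot\rho\simeq\mu$ on the attractive branch and $\dot\rho\simeq\lambda$ on the repulsive one for $t>b$, summarized in figure \ref{fig:rhobis}; your shooting/intermediate-value construction for the converse merely makes explicit what the paper leaves at the level of ``all the behaviours of $\rho(t)$ are known''. One small caveat: for $t<b$ the inequality $\Re(\lambda)\leq\dot\rho\leq\Re(\mu)$ holds only after averaging over the fast rotation of $\theta$, not pointwise, but this does not affect the integral estimate.
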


The input-output relation is described by its graph, drawn on figure \ref{fig:relHFc}.

\begin{figure}[ht]
\begin{center}
\includegraphics[height=6cm,width=4cm]{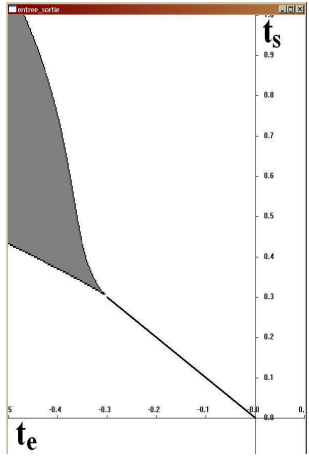}
\end{center}
\caption{The input-output relation for equation (\ref{eqx}) when there exists a big canard.}\lb{fig:relHFc}
\end{figure}

We could give more precise results if we consider the two variables $r$ and $\theta$ for the input-output relation. Indeed, when the point $(t_e,t_s)$ is in the interior of the graph of the input-output relation, we know that, at time of output, $\theta$ is going along the attractive slow curve which corresponds to the unique fast trajectory tangent to the eigenspace of the biggest eienvalue $\mu$.

\subsection{The focus-node bifurcation is a  bump}\lb{para:HFNb}
\setcounter{footnote}{0}

Here is the main part of this article. Today, I am not able to prove the expecting results, but I have propositions in this direction. To explain the problem, I will give conjectures.

Let us define the anti-bump $t^{**}$ and the two bumps $t^*_\lambda$ and $t^*_\mu$ as in definition \ref{def:butee}: 
$$R_\lambda(t_c)\ =\ R_\mu(t_c)\ =\ R_\lambda(t^{**})\ =\ R_\mu(t^{**})\ =\ R_\lambda(t^*_\lambda)\ =\ R_\mu(t^*_\mu)$$.
We have~~~~ $t^{**}<a<t^*_\mu= t^*_\lambda\leq b$~~ or~~ $t^{**}<a<b<t^*_\mu< t^*_\lambda$. In the first case, the bump is before the focus node bifurcation, and the study of paragraph \ref{Hopf-butée} is available. The interesting case is the second, where the computed bump is after the focus node bifurcation, this case is assumed with hypothesis HFN5.
\begin{conjecture}\lb{conj1} With hypothesis HFN1 to HFN5, the following proposition is generically wrong: 

If a trajectory of (\ref{eqX}) goes along the slow curve at least on $]t^{**},a[$, then it goes until the slow curve at least on $[t^{**},t^*_\mu]$. 
\end{conjecture}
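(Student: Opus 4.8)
The mechanism I expect is the one announced by the title of the paragraph: the square-root branch point of the reliefs at $t=b$ behaves as a bump, and the ``computed bump'' $t^*_\mu$ of Definition~\ref{def:butee}, which records only the critical value $R_c$ of $R_\lambda$ and is blind to the branch point, overestimates how far a trajectory arriving from the far left can stay on $\phi$. Since HFN5 forces $R_\lambda(b)<R_c$, the value $R_\lambda(b)=R_\mu(b)$ is a lower ceiling, attained on the real axis --- because $\mu>0$ on $]b,t_M[$ --- exactly at $t=b$. The plan is to prove: a trajectory of~(\ref{eqX}) that enters $\phi$ strictly before $t^{**}$ (hence follows $\phi$ on all of $]t^{**},a[$) is forced off $\phi$ at an instant infinitely close to $b$, thus before $t^*_\mu>b$. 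Such a trajectory contradicts the displayed proposition, and robustness of the construction accounts for the word ``generically'' (the exception being vector fields with a surstability at $b$, a non-generic situation). One should note that the naive bookkeeping along~(\ref{eqr}) predicts a \emph{much} later departure, at the $t_s$ with $\int_{t_e}^{b}\Re(\lambda(\tau))d\tau+\int_{b}^{t_s}\mu(\tau)d\tau=0$, i.e.\ $R_\mu(t_s)=R(t_e)>R_c$; as at the bump $t^*$ of paragraph~\ref{para:H}, this prediction is overruled by the global geometry of the relief, and locating the true ceiling is the whole difficulty.

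First I would fix a concrete model --- the family whose matrix appears in the caption of figure~\ref{fig:rt}, for which $\lambda,\mu=t\mp\sqrt{t-b}$, perturbed by lower-order terms so as to destroy the big canard, or directly the example of paragraph~\ref{par:relief} --- and check that HFN1--HFN5 hold on an open set of parameters \emph{and} that no big canard exists: when a big canard is present Proposition~\ref{prop:HFNcan} applies and the displayed proposition is actually \emph{true} (there $\int_{t_e}^{t_s}\Re(\mu(\tau))d\tau\geq0$ already forces $t_s\geq t^*_\mu$), so the counterexample must live in the no-big-canard regime of this paragraph. Then I would write the local model of the reliefs at $b$: from HFN4, $\lambda-\mu$ vanishes like $(t-b)^{1/2}$, so $F_\lambda(t)-F_\lambda(b)=\lambda(b)(t-b)+c_0(t-b)^{3/2}+\cdots$ with $\lambda(b)\neq0$; thus $b$ is a \emph{regular} point of the relief on each sheet, but one turn around $b$ exchanges $R_\lambda$ and $R_\mu$. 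The consequence to retain is that a path going down $R_\lambda$ (resp.\ $R_\mu$) in the sense of Definition~\ref{def:chemindescendant} cannot encircle $b$, so the descending paths for a fixed branch are confined to a cut plane.

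The core step has a relief half and a dynamical half. On the relief side, Theorem~\ref{th:Callot} reduces the question to: when $t_e<t^{**}$ (so $R(t_e)>R_c>R_\lambda(b)$), how far does a domain below $t_e$ (Definition~\ref{def:domainedescendant}) reach on the real axis? For a target $t_s\in\,]a,b[$ one has $R_\lambda(t_s)=R_\mu(t_s)<R_\lambda(b)$ and the two required descending paths exist, so a domain below $t_e$ reaching up to $b$ is available. The claim to prove is that none reaches a real $t_s>b$: a descending path would have to climb out of the valley $]a,b[$ --- where the relief stays below $R_\lambda(b)$ --- up to $t_s$ while skirting the branch point $b$ that lies in the way, which is impossible once the no-encircling constraint above is used; equivalently, the level set $\{R=R_\lambda(b)\}$ through $b$, together with the cut, disconnects the admissible part of ${\cal D}_t$ below $t_e$ from the real points $>b$. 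On the dynamical side one must turn ``no domain below $t_e$ past $b$'' into ``the trajectory really leaves $\phi$ near $b$''. Passing to the coordinates $(\rho,\theta)$ of~(\ref{eqr}) and using the averaging of paragraph~\ref{HFN-gdcan}, one gets $\rho(b)\simeq R_\lambda(b)-R(t_e)<0$, non-infinitesimally, so at $b^-$ the trajectory is still on $\phi$; at $t=b$ the $\theta$-equation undergoes a turning point --- its slow curve, the two $\arctan$ branches, is born there --- and one has to control the capture of the fast rotating $\theta$ by the attractive branch while transporting the relief obstruction through the exponential microscope $r=\exp(\rho/\eps)$ by differential inequalities of the type of~\cite{Ben10}, concluding that $r$ cannot remain infinitesimal past $b$, i.e.\ that $\rho$ is driven up to $0$ within an infinitesimal neighbourhood of $b$ --- just as $\rho$ is forced up at the bump $t^*$ in the Hopf case although it is still genuinely negative there.

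The main obstacle is this last, dynamical half, and it is why the statement stays a conjecture. Theorem~\ref{th:Callot} is only a \emph{sufficient} condition for staying on $\phi$; the converse I need --- absence of a descending-path domain forces departure --- rests, in the diagonalizable Hopf case, on the microscope~(\ref{eqHr}) and on~\cite{Ben10}, but at the focus-node point the linear part is not diagonalizable, the angular equation is itself slow-fast with a turning point at $b$, and the $\rho$-microscope must be run \emph{through} that turning point. Matching the global monodromy obstruction of the relief with this boundary layer at $b$ --- in particular, proving that the capture of $\theta$ does not postpone the growth of $\rho$ beyond an infinitesimal neighbourhood of $b$ --- is the step for which I have no complete argument; the rest should be routine once a concrete model is fixed.
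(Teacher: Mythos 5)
You are arguing for Conjecture \ref{conj1}, which the paper itself does not prove: the author states explicitly that he has no proof, and supports the conjecture only by a numerical experiment on the normal form (\ref{eqx}) and by the exact analysis of the distinguished solutions $X_\pm$. Your programme is therefore not comparable to a paper proof, but it is a coherent and genuinely different line of attack. Where the paper fixes the normal form, writes $X_\pm$ explicitly with the Airy matrix $M(t)$ of (\ref{eqM}), builds the Callot domains ${\cal D}_\pm$, and extracts $X_-(b)$ from an oscillating integral (Lemmas \ref{lem:estim} to \ref{lem:DA}) to compare it with $X_+(b)$, you propose a soft argument: a monodromy obstruction on the reliefs (descending paths for a fixed branch confined to a cut plane) combined with a boundary-layer analysis of the $(\rho,\theta)$ system at the turning point $b$. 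Your framing is accurate on several points the paper leaves implicit: that in the big-canard case Proposition \ref{prop:HFNcan} makes the displayed implication true, so the counterexample must be sought in the generic, no-big-canard regime; that the naive exit time given by $R_\mu(t_s)=R(t_e)$ is exactly what the conjecture denies; and that the missing step is a converse of Theorem \ref{th:Callot} at a point where the linear part is not diagonalizable. That acknowledged gap is real and is precisely why the statement remains a conjecture.

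Two cautions on the relief half, which you present as the routine part. First, your no-encircling principle is in tension with the paper's own construction: the domain ${\cal D}_+$ is extended \emph{past} $b$ precisely by passing to the two-sheeted covering and adjoining the conjugate domain, using the fact that the solution (unlike the relief) is analytic at $b$; so the obstruction you need is not purely relief-theoretic, and a claim that no descending domain reaches a real $t_s>b$ must explain why the same two-sheet extension fails for the forward trajectory $X_-$ (the paper asserts, without detail, that ${\cal D}_-$ needs no covering). Second, the quantitative content of the paper --- that $X_-(b)$ and $X_+(b)$ share the same asymptotic expansion, so that their difference is a homogeneous solution, beyond all orders at $b$, which then grows like $\exp\bigl((R_\mu(t)-R_\mu(b))/\eps^3\bigr)$ --- is exactly the information your soft argument would have to reproduce in order to locate the exit at $b$ itself rather than somewhere in $]b,t^*_\mu]$; the relief geometry alone does not decide between these, which is presumably why the author resorts to the explicit Airy computation.
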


To work on this conjecture, we will study an example which is, in some sense, a normal form of the problem: the slow curve is moved on the $t$-axis and the fast vector field is linearized. The example is
\systemedeux{eqx}{\eps^3\dot{x}}{tx+y+\eps^3c_1}{\eps^3\dot{y}}{(t-b)x+ty+\eps^3c_2}

\begin{proposition} A { numerical} simulation of equation  (\ref{eqx}) gave the figure  \ref{fig:traj}. It confirms conjecture \ref{conj1}.
\end{proposition}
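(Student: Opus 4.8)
The statement is of an experimental nature, so the plan is to pin down a numerical protocol precise enough that figure~\ref{fig:traj} can be reproduced and its conclusion trusted. First I would fix concrete data for (\ref{eqx}): a value $b>\uq$, so that $\lambda$ and $\mu$ are genuinely non-real for real $t<b$ and the node for $t>b$ is repulsive (for instance a value close to the one of figure~\ref{fig:rt}); a small $\eps$ of comparable order; and a \emph{generic} pair $(c_1,c_2)$, that is, one not among the exceptional values for which a big canard exists. For this example the reliefs are explicit: on the real axis $R(t)=\ud t^{2}$ for $t<b$, while $R_\lambda(t)=\ud t^{2}-\dt (t-b)^{\td}$ and $R_\mu(t)=\ud t^{2}+\dt (t-b)^{\td}$ for $t>b$, and $\lambda(t)=t-\sqrt{t-b}$ has the complex critical point $t_c=\ud+\ud i\sqrt{4b-1}$. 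From these one gets the critical value $R_c=\Re F_\lambda(t_c)$, then $t^{**}=-\sqrt{2R_c}$, and $t^{*}_\mu<t^{*}_\lambda$ as the two real roots exceeding $b$ of $R_\mu=R_c$ and $R_\lambda=R_c$; hypothesis HFN5, i.e.\ $\ud b^{2}<R_c$, is checked for the chosen $b$.

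Next I would integrate (\ref{eqx}) with a high-order adaptive scheme, starting from a time $t_e<t^{**}$ with $x(t_e)=y(t_e)=0$. Because the slow curve $X=0$ is attractive for real $t<0$ with contraction rate $\Re\lambda(t)/\eps^{3}\to-\infty$, the exact value of $t_e$ is immaterial: the solution is drawn onto $X\simeq\phi(t)$ and in particular goes along the slow curve on the whole of $]t^{**},a[$, which is exactly the hypothesis of the proposition appearing in conjecture~\ref{conj1}. I would monitor $r=\sqrt{x^{2}+y^{2}}$, equivalently $\rho=\eps^{3}\log r$, together with the polar angle $\theta$, and record the first time $t_s$ at which $r$ stops being exponentially small. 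The outcome shown by figure~\ref{fig:traj} is that $t_s<t^{*}_\mu$: the solution follows the slow curve through the delayed Hopf bifurcation and across the focus--node point $b$, after which $\theta$ is captured by the branch of the slow curve of (\ref{eqr}) tangent to the eigenspace of the larger eigenvalue $\mu$; then $\dot\rho$ is close to $\mu(t)>\Re\lambda(t)$, so $\rho$ returns to $0$ strictly before the value $t^{*}_\mu$ predicted by the naive rule ``exit at the smallest relief critical value''. In other words the focus--node point itself acts as an extra obstruction, the proposition quoted in conjecture~\ref{conj1} fails for this instance, and the conjecture is thereby confirmed.

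To back up the word ``generically'' I would rerun the experiment on a small grid of $(c_1,c_2)$ and of $b$: the exit time $t_s$ depends continuously on the data and stays bounded away from $t^{*}_\mu$, because the capture of $\theta$ past $b$ is structurally stable, whereas the values of $(c_1,c_2)$ producing a big canard (for which Proposition~\ref{prop:HFNcan} would permit $t_s$ up to $t^{*}_\mu$) form a thin set, in the spirit of the surstability picture of \cite{BFSW97}. As an independent check that the simulation lies in the asymptotic regime and is not a numerical artefact, I would compare it with the variation-of-constants formula $X=X_{\mathrm{part}}+\Phi(t)c$: the homogeneous part reduces to the scalar equation $\eps^{6}\ddot x-2\eps^{3}t\dot x+(t^{2}-t+b-\eps^{3})x=0$, whose Liouville--Green approximations have phases $F_\lambda/\eps^{3}$ and $F_\mu/\eps^{3}$, so the reliefs and the predicted exit match the computed ones. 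The main obstacle to upgrading this to a theorem --- the reason only a conjecture and a simulation are offered --- is the connection problem at the square-root turning point $t=b$ in the complex domain: one would have to control the Stokes-type matching of the two WKB modes there and then propagate it down the reliefs $R_\lambda$ and $R_\mu$, and that matching is precisely the hard analysis the paper leaves open.
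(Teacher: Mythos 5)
Your protocol for setting up and validating the simulation is reasonable as far as it goes (the paper offers no proof here beyond the run reported in the caption of figure \ref{fig:traj}: $b=0.3$, $c_1=0$, $c_2=-1$, $\eps^3=0.002$, RK4 with step $10^{-4}$, cross-checked against other integrators), and your identification of $t_c$, $R_c$, $t^{**}$ and $t^*_\mu<t^*_\lambda$ agrees with paragraph \ref{par:relief}. The genuine gap is in the outcome you predict and the mechanism you give for it. You describe the trajectory as crossing the focus--node point $b$, being captured by the slow branch of (\ref{eqr}) associated with the larger eigenvalue $\mu$, and therefore exiting ``strictly before $t^*_\mu$''. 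That is not what the experiment shows and not what the paper asserts: the figure (and Conjecture \ref{conj2}, which this simulation is also meant to support) has $X_-$ leaving the neighbourhood of the slow curve \emph{exactly at} $b$, i.e.\ the focus--node point itself acts as the bump. Worse, your proposed mechanism does not actually produce an early exit. The deviation of your trajectory from the distinguished solution $X_+$ is of order $\exp((R(t)-R_c)/\eps^3)$ up to $t=b$ (this is the mountain-pass/relief bookkeeping, and is consistent with $X_\pm(b)=O(\eps^3)$ as computed in paragraphs \ref{para:Xp} and \ref{para:Xm}); if past $b$ that deviation simply grows along the $\mu$-mode, it reaches order one precisely when $R_\mu(t)=R_c$, i.e.\ at $t=t^*_\mu$ and not before. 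In other words, ``capture by the $\mu$-branch'' is exactly the naive relief prediction that Conjecture \ref{conj1} declares generically \emph{wrong}; followed to its conclusion it would refute the conjecture rather than confirm it. The interest of the experiment is precisely that the exit occurs at $b$, where the relief argument predicts nothing special (HFN5 guarantees $R(b)<R_c$), so the explanation must come from the breakdown of the two-mode description at the square-root turning point --- the Airy connection that the rest of section \ref{para:HFNb} tries to quantify through the expansions of $X_\pm(b)$ --- and not from a competition between $\Re\lambda$ and $\Re\mu$ after $b$.

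Two smaller points. First, the condition $b>\uq$ is needed so that the node for $t>b$ is repulsive (equivalently $t^2-t+b>0$ for all real $t$); the eigenvalues are non-real for $t<b$ whatever the value of $b$. Second, the paper's own independent check is sharper than the Liouville--Green comparison you propose: for this affine system $X_\pm$ have the closed forms (\ref{solp})--(\ref{solm}) in terms of Airy functions, and reproducing the figure from those formulas would be the natural way to rule out a numerical artefact.
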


\begin{figure}[ht]
\begin{center}
\includegraphics[height=4cm,width=7cm]{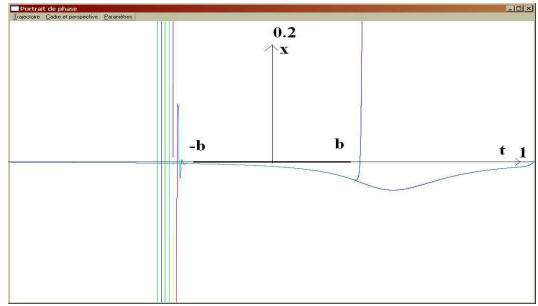}
\end{center}
\caption{Trajectories $X_-$ and $X_+$: the first goes along the horizontal axis from $-\infty$ to $b$, where it jumps outside the neighborhood of the horizontal axis; the second one goes along the horizontal axis from $+\infty$ to $-b$ where it has big oscillations. The parameters are $b=0.3$, $c_1=0$, $c_2=-1$, $\eps^3=0.002$, the trajectory is computed with a RK4 method, with step $0.0001$. Other methods and other steps were tried, and the results are always very similar. }\lb{fig:traj}
\end{figure}

This proposition gives a good argument for the next conjecture, more precise than the first one:

\begin{conjecture}\lb{conj2} If a trajectory of system  (\ref{eqX}) goes along the slow curve in a neighborhood of a real $t$ with $t<a$ and $R(t)>R(b)$, then it does not go along the slow curve after the focus-node bifurcation point  $b$.
\end{conjecture}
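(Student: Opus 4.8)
The plan is to reduce the statement, through the exponential microscope of paragraph~\ref{HFN-gdcan}, to a geometric question about the reliefs, and then to settle that question by studying the level lines of $R_\mu$ on its sheet of the Riemann surface of the eigenvalues. \emph{Step 1 (the node in the microscope).} Put the slow curve on the axis and pass, as in paragraph~\ref{HFN-gdcan}, to the variables $(\rho,\theta)$ of system~(\ref{eqr}), with $r=\exp(\rho/\eps)$. A trajectory goes along the slow curve near a real $t$ exactly while $\rho(t)<0$ is non infinitesimal, and, by the differential-inequality argument of~\cite{Ben10}, it leaves the neighborhood of the slow curve as soon as $\rho$ becomes infinitesimal. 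For real $t<b$ the angle turns infinitely fast and the averaging of paragraph~\ref{HFN-gdcan} gives $\langle\dot\rho\rangle\simeq\Re\lambda(t)$, so $\rho$ follows $R=R_\lambda=R_\mu$; for real $t>b$ the $\theta$-equation is slow-fast, with an attractive branch carried by the $\mu$-eigendirection and a repulsive branch carried by the $\lambda$-eigendirection, and for a generic trajectory $\theta$ settles on the attractive branch, so $\dot\rho\simeq\mu(t)>0$ and $\rho$ then follows $R_\mu$. A surstability-type genericity hypothesis, in the spirit of HFN5, is needed here to exclude a canard of the $\theta$-equation clinging to the repulsive branch; the transition across $b$ takes place on the Airy scale $|t-b|\sim\eps^{2/3}$ and changes $\rho$ only infinitesimally.

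\emph{Step 2 (reduction to a relief problem).} Combining Step 1 with Callot's theorem~\ref{th:Callot}, adapted to the Riemann surface of the eigenvalues, a trajectory with $X(t_e)\simeq\phi(t_e)$ can go along the slow curve past a real point $t_s>b$ only if there is, in the complex plane, a path from $t_e$ to $t_s$ that goes down $R=R_\lambda=R_\mu$ until the branch point $b$ and then, on the $\mu$-sheet, goes down $R_\mu$. So it suffices to prove: \emph{if $t_e$ is real with $t_e<a$ and $R(t_e)>R(b)$, then no path goes down $R_\mu$ from $t_e$ to any real $t_s>b$.} The absence of a big canard is essential here: by Proposition~\ref{prop:HFNcan}, were a big canard present one could find, with $R(t_e)>R(b)$, trajectories going along the slow curve well past $b$ (it suffices that $R_\lambda(t_s)\leq R(t_e)\leq R_\mu(t_s)$, which holds for an interval of $t_s>b$); the obstruction below therefore comes precisely from the complex critical point $\overline{t_c}$ that HFN5 substitutes for the big canard, and from its level $R_c>R(b)$.

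\emph{Step 3 (the relief obstruction: the hard part).} On the real axis $R_\mu$ is strictly decreasing on $]t_m,a[$ and strictly increasing on $]a,+\infty[$, with $R_\mu(a)=0$, $R_\mu(b)=R(b)$, and unique critical point $\overline{t_c}$ of value $R_c$; by HFN5, $R(b)<R_c$ and the real point $t^*_\mu>b$ where $R_\mu=R_c$ lies beyond $b$. A path going down $R_\mu$ from $t_e$ must leave the real axis before entering $]a,t_s[$, which is uphill; since $\mu(a)$ is purely imaginary with positive imaginary part, $R_\mu(t)-R_\mu(a)=-\Im(\mu(a))\,\Im(t-a)+O((t-a)^2)$ near $a$, so the path must go into the upper half-plane. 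What must then be shown is that, on the $\mu$-sheet, the connected component of $t_e$ in $\{R_\mu<R(t_e)\}$ does not meet the real ray $]b,+\infty[$: that ray is a valley of $R_\mu$ whose bottom end is the square-root point $b$ at level $R(b)$, and this valley is cut off from $t_e$ by a ridge located at level $R(b)<R(t_e)$, which a path going downhill from $t_e$ cannot cross from below. Making this precise requires (i) the local model of $R_\mu$ near $b$ --- a tilted relief $R(b)+\mu(b)\,\Re(t-b)+\frac{2}{3}\Re\left(c\,(t-b)^{3/2}\right)$ whose level lines near $b$ display the ternary Stokes structure of a $3/2$-power (Airy) singularity; and (ii) a global argument, using the symmetry $R_\mu(\overline{t})=R_\lambda(t)$ and the fact that $\overline{t_c}$ is the only critical point of $R_\mu$, to show that the local structures at $b$ close up into a genuine separating curve.

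The main obstacle is exactly Step 3(ii): passing from the local Airy picture at $b$ and the single complex saddle to a global statement about the connected components of the sublevel sets of $R_\mu$. In the Hopf-only case the corresponding step is the passage from figure~\ref{fig:reliefH} to the bump theorem of section~\ref{para:H}; the new difficulty is that the relevant sheet now carries a branch point in its interior, so $R_\mu$ is not smooth there and Morse theory must be replaced by a direct analysis of the level lines of a harmonic function with a $3/2$-power singularity. One should in parallel make precise the genericity invoked in Step 1, namely the absence of a canard of the node's $\theta$-equation along its repulsive branch, which plays here the part taken by the surstability hypothesis discarded in the Hopf case.
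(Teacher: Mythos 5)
The statement you are addressing is presented in the paper as an open conjecture: the author explicitly says he cannot prove it, and supports it only by a numerical simulation of the normal form (\ref{eqx}), by the propagation result ``if the conjecture is true for one trajectory it is true for all'' (which rests on Proposition \ref{prop:HFNcan}), and by the explicit Airy-function evaluation of $X_+(b)$ and $X_-(b)$ from formulas (\ref{solp})--(\ref{solm}), whose asymptotic expansions are found to coincide. Your proposal is likewise not a proof --- you say so yourself at Step 3(ii) --- so at best it is an alternative strategy. The comparison is still instructive: you try to reduce everything to the geometry of descending paths for the reliefs, whereas the paper abandons that route at $b$ and replaces it by an exact connection problem (steepest descent on the Airy representation of $X_-$ near the turning point). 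The reason the paper does this is precisely that $b$ is \emph{not} a critical value of the relief (HFN5 assumes $R_\lambda(b)<R_c$, and the computed bumps $t^*_\mu<t^*_\lambda$ lie strictly beyond $b$), so the conjecture asserts an exit at a point that the relief machinery does not flag; whatever stops the trajectory at $b$ must be invisible to, or at least not decidable by, sub-level-set considerations alone.

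The concrete gap in your argument, beyond the admitted incompleteness of Step 3(ii), is the ``only if'' in Step 2. Theorem \ref{th:Callot} is a sufficiency statement: a domain below $t_e$ guarantees the solution stays infinitesimal there. You need the converse --- no descending path implies exit --- and that converse is exactly the delicate surstability/genericity issue, not a consequence of Callot. Worse, the paper itself exhibits a counterexample to necessity in this very setting: the trajectory $X_+$ is infinitesimal on a domain strictly larger than the Callot domain ${\cal D}_+$ (one must adjoin the conjugate sheet through the branch point, figure \ref{fig:domainep}), because $X_+$ is analytic at $b$ and continues around the square-root singularity of the eigenvalues. So even if you established the separating ridge of Step 3, the implication ``no descending path from $t_e$ to $t_s$, hence the trajectory leaves before $t_s$'' would remain unjustified, and is in fact false as a general principle here. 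Any completion of your plan must therefore either prove a genuine converse adapted to the two-sheeted relief (controlling the exponentially small terms that distinguish $X_-$ from $X_+$, which is what the computations of paragraphs \ref{para:Xp} and \ref{para:Xm} are groping for), or switch, as the paper does, to an exact local analysis at $b$; the relief picture alone cannot decide Conjecture \ref{conj2}.
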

So, generically, the input-output relation of equation (\ref{eqX}) has a graph similar to the graph of figure  \ref{fig:relH}; if $R(t^{**})>R(b)$, we have to replace $t^*$ by $b$ et $t^{**}$ par $t^{**}_b$ where $R(t^{**}_b)=R(b)$. The delay of the Hopf bifurcation is stopped either by the bump (as in case of a Hopf bifurcation alone) either by the focus-node bifurcation.

\begin{proposition} If the conjecture \ref{conj2} is true for one trajectory, then it is true for all of them.
\end{proposition}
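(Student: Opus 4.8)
The plan is to show that if one trajectory going along the slow curve in a neighborhood of some real $t$ with $t<a$ and $R(t)>R(b)$ fails to go along the slow curve past $b$, then every such trajectory fails as well; and conversely if one does pass $b$, all do. The key observation is that the set of solutions of (\ref{eqX}) that are infinitesimally close to $\phi$ near such a point $t$ is, up to exponentially small terms, a one-real-parameter family: after the changes of unknown $X=\phi(t)+Z$, $Z=(r\cos\theta,r\sin\theta)$, $r=\exp(\rho/\eps)$ as in paragraph \ref{HFN-gdcan}, these trajectories all share (to leading order) the same $(\theta,\rho)$ behaviour on $t<b$, namely $\theta$ relaxing onto the attractive branch of the slow curve of the $\theta$-equation and $\langle\dot\rho\rangle\simeq\Re(\lambda(t))$. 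Two such trajectories $X^{(1)},X^{(2)}$ differ by a solution $W=X^{(1)}-X^{(2)}$ of the variational equation $\eps\dot W = D_Xf(t,\phi(t),0)W + (\text{higher order})W$, whose norm in the microscope variable is $\exp(\rho^{(i)}/\eps)$-small; so the two $\rho$-graphs are infinitesimally close as long as both stay negative non-infinitesimal.

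First I would make precise the phrase ``goes along the slow curve in a neighborhood of $t$'': it means there is an interval $I\ni t$ on which $X(\tau)\simeq\phi(\tau)$, equivalently $\rho(\tau)$ is negative and non-infinitesimal on $I$. Then I would invoke a Callot-type / Gronwall argument (the ``more subtle argument'' referenced in paragraphs \ref{par:Hc} and \ref{HFN-gdcan}, cf. \cite{Ben10,Cal4}): along any descending path of the relief, a solution starting with $\rho$ negative non-infinitesimal keeps $\rho$ negative non-infinitesimal, and the common leading dynamics $\dot\rho\simeq\Re(\lambda)$ (for $t<b$, via the averaging of (\ref{eqr})) forces $\rho(t)\simeq \rho(t_0)+\int_{t_0}^t\Re(\lambda(\tau))\,d\tau$ up to the point where $\rho$ would hit $0$. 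Hence whether $\rho$ survives (stays negative non-infinitesimal) up to and beyond $t=b$ depends only on the value $R(t)-R(b)$ relative to the height $\rho(t)$, not on which particular trajectory we picked — two trajectories in the family have $\rho$-values differing by an infinitesimal, so one reaches $\rho=0$ before $b$ iff the other does. This gives the ``all or nothing'' dichotomy, which is exactly the statement.

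The second, structural ingredient is that the one-parameter family is genuinely ``all'' the relevant trajectories: any solution that goes along the slow curve near such a $t$ must, by the contraction in the attractive $\theta$-direction, lie on the attractive branch of the $\theta$-slow-curve, so it is determined up to the single parameter $\rho$ (the ``height'') plus exponentially small corrections; and changing $\rho$ by a finite amount only shifts the whole $\rho$-graph vertically, which cannot change the qualitative outcome at $b$ once we are in the regime $R(t)>R(b)$ — because in that regime the relief budget $R(b)-R(t)$ is strictly negative and bounded away from $0$, so no finite shift of $\rho$ (which must itself be negative non-infinitesimal at $t$ for the hypothesis to hold) can carry $\rho$ above $0$ at $b$. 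I would assemble these: (i) reduction to the $(\theta,\rho)$ system; (ii) the attractive-branch argument pinning down the family; (iii) the averaging formula $\langle\dot\rho\rangle\simeq\Re(\lambda)$ on $t<b$; (iv) the Gronwall/relief comparison showing two family members stay infinitesimally close in $\rho$; (v) conclude the dichotomy.

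The main obstacle I expect is step (iv) made uniform \emph{up to and through the singular point $t=b$}: near $b$ the two eigenvalues collide, the linear part is not diagonalizable, the relief has a square-root branch point, and the $\theta$-dynamics changes character (the two branches of the $\theta$-slow-curve merge), so the averaging that gave $\langle\dot\rho\rangle\simeq\Re(\lambda)$ degenerates precisely where we need control. Handling this requires an Airy-type local analysis at $b$ (as the abstract's ``Airy, relief'' keywords suggest) to show that the passage through $b$ is, to leading order, trajectory-independent — that the exponentially small splitting between any two members of the family is not amplified by a factor large enough to matter when $R(b)-R(t)$ is a fixed negative number. Equivalently one must verify there is no surstability phenomenon at $b$ that could select one trajectory over another; absent that, the dichotomy follows, but proving the no-surstability/uniform-smallness claim at the coalescence point is the crux and is, as the author notes, the part for which a complete proof is still missing.
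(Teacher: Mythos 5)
There is a genuine gap here, and it is precisely the one you identify yourself at the end: your direct comparison argument requires uniform control of the passage through the coalescence point $t=b$, and you concede you cannot supply it. A proof that ends with ``the crux \ldots is still missing'' is not a proof. But the real issue is that you have missed the reduction that makes this proposition almost immediate and avoids the hard analysis at $b$ altogether. The statement is proved by contraposition: suppose some trajectory $\tilde X$ \emph{violates} conjecture \ref{conj2}, i.e.\ it goes along the slow curve on an interval $]t_1,t_2[$ with $t_1<t^{**}_b<a<b<t_2$. Then, restricting the problem to $]t_1,t_2[$, this very trajectory \emph{is} a big canard in the sense of paragraph \ref{HFN-gdcan}, so Proposition \ref{prop:HFNcan} applies. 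That proposition (whose proof already carried out, under the big-canard hypothesis, the $(\theta,\rho)$ analysis through the focus--node point, with the change of unknown centred on $\tilde X$ rather than on $\phi$) gives the full input--output relation, and it shows that every trajectory entering the slow curve before $t^{**}_b$ stays along it until $b$ and a little beyond --- hence every such trajectory also violates the conjecture. So falsity for one implies falsity for all, which is the claim.

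Your approach founders exactly because you centre the microscope on $\phi$ instead of on a distinguished solution: without an exact solution on the axis, the inhomogeneous $O(\eps)$ terms and the degeneracy of the averaging at $b$ (eigenvalue collision, merging of the two branches of the $\theta$-slow-curve) must be controlled by hand, which is the genuinely hard open part of the paper (conjecture \ref{conj2} itself, attacked via the Airy-function computation of $X_\pm(b)$). The point of the proposition is that this hard analysis is \emph{not} needed for the ``one implies all'' statement, because the negated conclusion hands you the big canard for free and lets you quote Proposition \ref{prop:HFNcan}. Your steps (i)--(iii) are consistent with the machinery of paragraph \ref{HFN-gdcan}, but steps (iv)--(v), as you set them up, cannot be closed without the reduction above.
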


\begin{proof} Assume that equation (\ref{eqX}) has a solution $\tilde{X}$ which does not verify conjecture \ref{conj2}. Then, $\tilde{X}$ goes along the slow curve on an interval  $]t_1,t_2[$ with $t_1<t^{**}_b<a<b<t_2$. If the problem is considered on a restricted interval $]t_1,t_2[$, the equation has a big canard, and we can apply the  proposition \ref{prop:HFNcan}. Then all trajectories  going along the slow curve before $t^{**}_b$ goes along the slow curve until $b$, and even a little more.
\end{proof}

In this article, we will now study only equation  (\ref{eqx}). We changed $\eps$ into $\eps^3$ only to avoid fractionnary exponents. The analytic structure with respect to $\eps$ is obviously modified, but does not matter for our purpose.

To study the phase portrait of equation (\ref{eqX}) or (\ref{eqx}), two trajectories are very important. They are called {\it distinguished trajectories} by JL.Callot and they are very classical. The first one, denoted $X_+$ goes along the slow curve for $t$ near $t_M$. Similarly, $X_-$ goes along the slow curve for $t$ near $t_m$. These two trajectories are Fenichel's manifolds, they are unique when $t_m=-\infty$ and $t_M=+\infty$. For the particular equation (\ref{eqx}), these two trajectories are drawn on figure \ref{fig:traj}.  We have for this example a nice fact:  $X_-$ and $X_+$ have an explicit formula, using the Airy function (in an appendix (section  \ref{annexe}) , we give classical needed results on Airy functions and Airy equation). 
\begin{equation}\lb{solp}
X_+(t)\ =\ \left(\begin{array}{c}x_+(t)\\y_+(t)\end{array}\right)\ =\ 
-e^{\ud\frac{t^2}{\eps^3}}M(t)\int_t^{+\infty}e^{-\ud\frac{\tau^2}{\eps^3}}M^{-1}(\tau)d\tau
\left(\begin{array}{c}c_1\\c_2\end{array}\right)
\end{equation}\begin{equation}\lb{solm}
X_-(t)\ =\ \left(\begin{array}{c}x_-(t)\\y_-(t)\end{array}\right)\ =\ 
e^{\ud\frac{t^2}{\eps^3}}M(t)\int_{-\infty}^te^{-\ud\frac{\tau^2}{\eps^3}}M^{-1}(\tau)d\tau
\left(\begin{array}{c}c_1\\c_2\end{array}\right)
\end{equation}
\begin{equation}\lb{eqM}
\mbox{where~~}M(t)\mbox{~~$=$~~}\sqrt{\frac{\pi}{\eps}}\left(\begin{array}{cc}
A\left(j\frac{t-b}{\eps^2}\right)&A\left(j^2\frac{t-b}{\eps^2}\right)\\
\eps jA'\left(j\frac{t-b}{\eps^2}\right)&\eps j^2A'\left(j^2\frac{t-b}{\eps^2}\right)\end{array}\right) \mbox{~~~~with~~~~}
\det(M(t))\ =\ \frac{i}{2}
\end{equation}
All the integrals are convergent because the Airy function is bounded at infinity by $C|t|^{-\td}e^{\dt |t|^\td}$.
 
\subsubsection{The relief}\lb{par:relief}

In this paragraph, we want to explore the methods used in paragraph \ref{Hopf-butée} when there is a focus-node bifurcation. We also check the hypothesis HFN1 to HFN5.

Hypothesis HFN1 to HFN3 are obvious with the slow curve $\phi(t,X,0)=0$ and the domain  ${\cal D}=\CC\times\CC^2\times\CC$.

The computation of the eigenvalues of the jacobian matrix  
$J(t)=\left(\begin{array}{cc}t&1\\t-b&t\end{array}\right)$ 
gives
$$\lambda(t)=t-(t-b)^\ud\mbox{~~~~~~}\mu(t)=t+(t-b)^\ud$$

The determination of the square root is needed to allow the formula above. In all this paragraph, we choose a cut-off on the positive real axis:
$$(re^{i\theta})^\ud\:=\ \sqrt{r}\ e^{\frac{i\theta}{2}}\mbox{~~~~~~}\theta\in[0,2\pi[$$

For the function $()^\td$, we choose the same cut-off.

The relation $\overline{t^\ud}=-{\overline{t}}^\ud$ will be useful. Then, $\lambda$ and $\mu$ are the two determinations of a multiform function. The cut-off is the semi-axis $[b,+\infty[$, and $\overline{\mu(t)}=\lambda(\overline{t})$.

For $a=0$ and  
\begin{equation}\lb{hyp:b1}b>\uq,\end{equation}
the hypothesis HFN4 is easy to check.

The two associated reliefs are given by
$$F_\lambda(t)=\ud t^2-\dt(t-b)^\td-\dt ib^\td\mbox{~~~~~~}F_\mu(t)=\ud t^2+\dt(t-b)^\td+\dt ib^\td$$
$$R_\lambda(t)=\Re(F_\lambda(t))\mbox{~~~~~~~~~~~~~~~~}R_\mu(t)=\Re(F_\mu(t))$$

\begin{figure}[ht]
\begin{center}
\includegraphics[height=7cm,width=10cm]{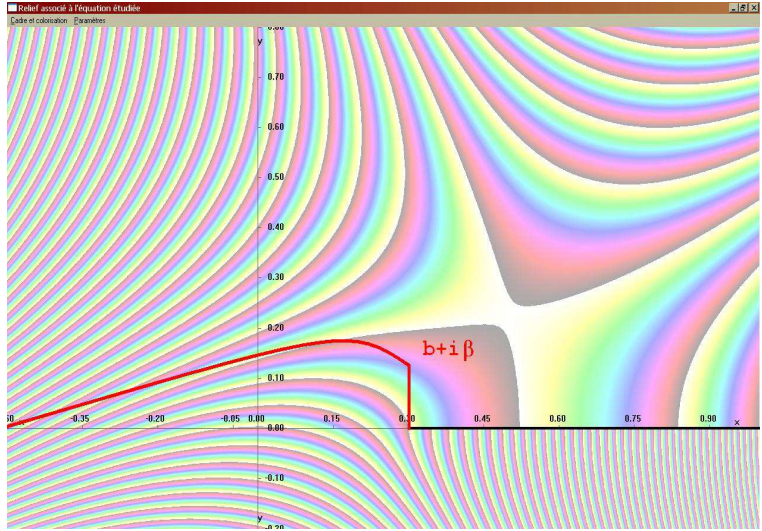}
\end{center}
\caption{Level curves of relief $R_\lambda$ for $b=0.3$, and path used in paragraph \ref{para:Xm}.}\lb{fig:relief}
\end{figure}

Let us comment figure \ref{fig:relief}: the value of $R_\lambda$ is $+\infty$ at both ends of the real axis. If a path goes from $t=-\infty$ to $t=+\infty$, it has to go down at least until the mountain pass, which is the unique critical point of the relief given by
$$t_c\ =\ \ud\ +i\sqrt{b-\uq}\mbox{~~~~~~}=\ 0.500+0.224\ i$$
The value of the relief at this critical point is
$$R_c=R_\lambda(t_c)\ =\ \ud b-\uud\mbox{~~~~~~}=\ 0.067$$
We solve now on the real axis the equation $R_\lambda(t)=R_c$. The solution are $t_e$ and $t_s$ given by
$$\left\{\begin{array}{ccccccc}
\mbox{~~if~~}&b>\ud+\us\sqrt{3}&~~,~~&t_e&=&-\sqrt{b-\us}&~~~~t_s=\sqrt{b-\us}\\
\mbox{~~if~~}&b<\ud+\us\sqrt{3}&~~,~~&t_e&=&-\sqrt{b-\us}&
\left\{\begin{array}{ccl}
t_{s1}&=& ...\\
t_{s2}&=& ...
\end{array}\right.
\end{array}\right.$$
The symbols $...$ in the formula above are the solutions of a polynom in $t$ of degree $4$. The exact expression is not needed. For $b=0.3$, we have
$$t_e=-0.365\mbox{~~~~~~}t_{s1}=0.346\mbox{~~~~~~}t_{s2}=0.525$$
The value $t_{s1}$ is on the sheet right to the cut-off: $\arg(t_{s1})=2\pi$. Besides, $t_{s2}$ is on the sheet left to the cut-off: $\arg(t_{s2})=0$. When we look on the polynom which has $t_{s1}$ and $t_{s2}$ as roots, we can prove that the hypothesis HFN5 is satisfied for 
\begin{equation}\lb{hyp:b}
\uq\ <\ b\ <\ \ud+\us\sqrt{3}
\end{equation}

\subsubsection{Callot's domains}
To study the canards of equation (\ref{eqx}), we introduce two special solutions, called distinguished solutions by J.L. Callot:  $X_+=(x_+,y_+)$ has an asymptotic\footnote{Here the things are easier than in the general case because the domain ${\cal D}_t$ contains the whole real axis. In general case, there is no unicity of the distinguished solution, but the difference remains exponentially smaller than the computed quantities.} condition  $X_+(+\infty)=0$ and  $X_-=(x_-,y_-)$ has an asymptotic condition $X_-(-\infty)=0$. They are unique. In this paragraph we build a domain ${\cal D}_+$ where $X_+$ is infinitesimal (it corresponds in the complex plane to the expression "going along a real interval"). In allmost all situations, the builded domain is the maximal domain with this property.

\paragraph{For trajectory $X_+$}
In this paragraph, it is better to change the cut-off, and we define (only in this paragraph)
$$(re^{i\theta})^\ud\:=\ \sqrt{r}\ e^{\frac{i\theta}{2}}\mbox{~~~~~~}\theta\in[-\ud\pi,\td\pi[$$
We are looking for a complex domain ${\cal D}_+$ such that the real point $+\infty$ is in ${\cal D}_+$, the singularity $b$ is not in ${\cal D}_+$. We look for domains below $+\infty$ (see definition \ref{def:domainedescendant}) for the relief $R_\lambda$  and also below $+\infty$ for the relief $R_\mu$. 

On figure \ref{fig:domainep}, such domain is drawn\footnote{The picture is a little bit different when $b$ is greater or smaller than $-\td-\us\sqrt{123}$. For this particular value, we have $R_\mu(b)=R_\mu(t_c)$.} in dark. Attention: at the left, the domain has a spike with a real part smaller than $-b$ and a nonzero imaginay part. The intersection of ${\cal D}_+$ with the real axis is  $]-b,b[\cup]b,+\infty[$. The theorem of Callot (theorem \ref{th:Callot}) says that  $X^+$ is infinitesimal on the whole $S$-interior of ${\cal D}_+$. 

Actually, a more precise study shows that the domain ${\cal D}_+$ is not the maximal domain where $X^+$ is infinitesimal: if we consider domains on the the Riemann surface (two sheets covering) we can add to ${\cal D}_+$ its conjugate (drawn in lightgray on the figure \ref{fig:domainep}). Because the solution $X^+$ is analytic without singularity at point $b$, it is infinitesimal on the symetric domain.

\begin{figure}[ht]
\begin{center}
\includegraphics[height=7cm,width=10cm]{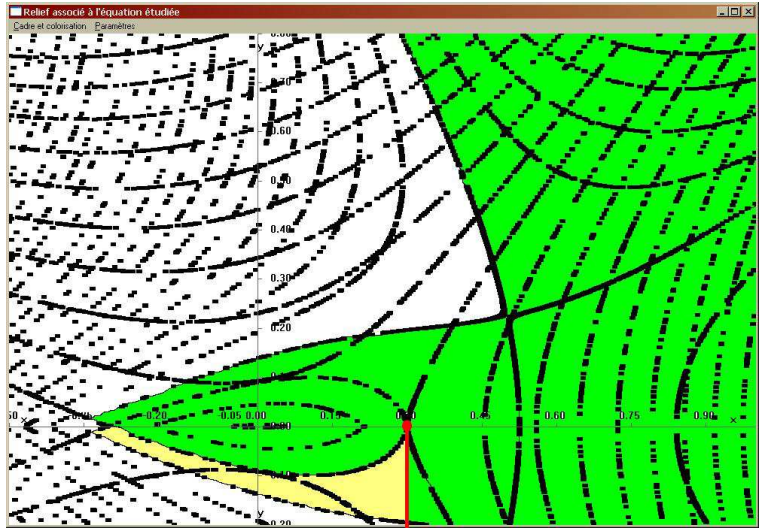}
\end{center}
\caption{The domain ${\cal D}_+$ for $b=0.3$}\lb{fig:domainep}
\end{figure}

\paragraph{For trajectory $X_-$}
A similar method gives the domain ${\cal D}_-$ such that $X_-$ is infinitesimal on the $S$-interior of ${\cal D}_-$. It is easier because we do not need to consider a two sheets covering. The domain ${\cal D}_-$ is drawn on figure  \ref{fig:domainem}.

\begin{figure}[ht]
\begin{center}
\includegraphics[height=7cm,width=10cm]{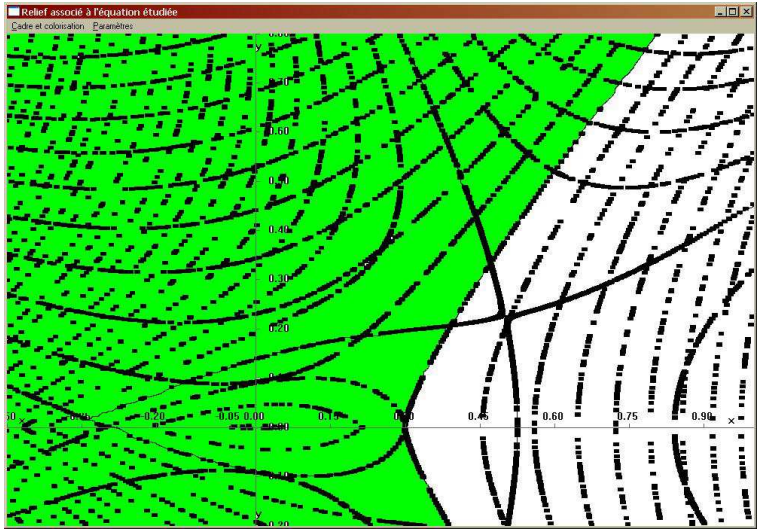}
\end{center}
\caption{The domain ${\cal D}_-$ for $b=0.3$}\lb{fig:domainem}
\end{figure}

\subsubsection{Evaluation of $X_+(b)$}\lb{para:Xp}
The slow curve $x=y=0$ is repulsive for all positive $t$. Then the trajectory $X_+$ is infinitesimal at least for all $t$ positive non infinitesimal (in fact it is infinitesimal on a larger interval). Its asymptotic expansion in power of $\eps^3$ is given by formal identification in the equation:  $X_+=\sum_{n\geq 0} X_n(t)\eps^{3n}$ has to verify the recurrence identities 
$$\systemededeux{\dot{x}_{n-1}}{tx_n+y_n+\delta_{n-1}c_1}{\dot{y}_{n-1}}{(t-b)x_n+ty_n+\delta_{n-1}c_2}$$
where $\delta_{n-1}=1$ if $n=1$ and vanishes for all others $n$. 
The computation of the first terms is easy:
$$x(t)\ =\ \frac{-c_1t+c_2}{t^2-t+b}\eps^3\ +\ \frac{t(t^2+t-3b)c_1+(-3t^2+t+b)c_2}{(t^2-t+b)^3}\eps^6\ +\ O(\eps^9)$$
$$y(t)\ =\ \frac{(t-b)c_1-tc_2}{t^2-t+b}\eps^3\ +\ \frac{(-2t^3+3bt^2+bt-b^2)c_1+(t^3+2t^2-3bt-t+b)c_2}{(t^2-t+b)^3}\eps^6\ +\ O(\eps^9)$$
and we we have now proved the

\begin{proposition}
$$X_+(b)\ =\ \left(\begin{array}{c}
\left(-\frac{1}{b}c_1+\frac{1}{b^2}c_2\right)\eps^3\ +\ \left(\left(\frac{1}{b^3}-\frac{2}{b^4}\right)c_1+\left(-\frac{3}{b^4}+\frac{2}{b^5}\right)c_2\right)\eps^6\ +\ O(\eps^9)\\
~\\
-\frac{1}{b}c_2\eps^3\ + \left(\frac{1}{b^3}c_1+\left(\frac{1}{b^3}-\frac{1}{b^4}\right)c_2\right)\eps^6\ +\ O(\eps^9)\\
\end{array}\right)$$
\end{proposition}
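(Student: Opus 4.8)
The plan is to treat this as a substitution problem: the asymptotic expansion $X_+(t)=\sum_{n\ge 0}X_n(t)\eps^{3n}$ has already been set up above via the recurrence identities, so once one knows that this expansion is legitimate at $t=b$, the proposition is obtained by plugging $t=b$ into the closed forms for $x(t)$ and $y(t)$ and simplifying.

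First I would justify that $X_+$ admits its power-series expansion in $\eps^3$ on a full real neighbourhood of $t=b$, not merely on $]b,+\infty[$. The key point is that at $t=b$ the jacobian $J(b)=\left(\begin{array}{cc}b&1\\0&b\end{array}\right)$ has the double eigenvalue $b>\uq>0$, so the slow curve $x=y=0$ is still normally hyperbolic repulsive there: the focus-node collision only destroys diagonalisability of $J$, not hyperbolicity. Hence the distinguished solution $X_+$ issued from $+\infty$ stays infinitesimally close to $\phi\equiv 0$ at $t=b$; in this explicit example one may even read this off the Airy formula \eqref{solp}, since $M(t)$ and $M^{-1}(t)$ are analytic (entire in $(t-b)/\eps^2$), $\det M(t)=\frac{i}{2}$, and the integral converges, so $X_+(b)$ is well defined and infinitesimal. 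It follows that the series generated by the recurrence is a genuine asymptotic expansion of $X_+(t)$ uniformly for $t$ near $b$, with remainder $O(\eps^9)$ past the $\eps^6$ term.

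Second I would check the stated closed forms. Substituting $X_+=\sum X_n\eps^{3n}$ into \eqref{eqx} and collecting powers of $\eps^3$ gives, at order $0$, the homogeneous system $tx_0+y_0=0$, $(t-b)x_0+ty_0=0$ whose determinant $t^2-t+b$ does not vanish on the interval considered, so $X_0\equiv 0$; at order $n\ge 1$ one gets $tx_n+y_n=\dot x_{n-1}-\delta_{n-1}c_1$, $(t-b)x_n+ty_n=\dot y_{n-1}-\delta_{n-1}c_2$, uniquely solvable by Cramer's rule, which yields the displayed rational fractions for $n=1$ (immediate) and $n=2$ (a routine but longer computation, with denominator $(t^2-t+b)^3$). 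Finally, for the proposition itself, I set $t=b$: then $t^2-t+b=b^2-b+b=b^2\neq 0$ by \eqref{hyp:b1}, so no singularity appears, and simplifying the numerator polynomials at $t=b$ (e.g.\ $b(b^2+b-3b)=b^3-2b^2$, $-3b^2+b+b=-3b^2+2b$, $-2b^3+3b^3+b^2-b^2=b^3$, etc.) and dividing by $b^2$ resp.\ $b^6$ produces exactly the coefficients claimed; the remainder remains $O(\eps^9)$ by the uniformity noted above.

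The only genuine obstacle is the first step — making precise that the expansion of the distinguished solution $X_+$ is valid up to and including the focus-node point $t=b$ rather than only past it; everything else is bookkeeping. Here it is painless thanks to the explicit Airy representation \eqref{solp} and the fact that $b$ is a regular point of the solution itself (only a branch point of the eigenvalue/relief structure). In a general HFN-equation one would instead invoke the Fenichel-manifold / distinguished-solution machinery at the normally hyperbolic point $b$, or use Callot's domain ${\cal D}_+$, whose real trace $]-b,b[\cup]b,+\infty[$ accumulates at $b$, together with analyticity of $X_+$ at $b$ to pass to the closure.
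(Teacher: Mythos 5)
Your proof follows essentially the same route as the paper: formal identification of the expansion $X_+=\sum_n X_n(t)\eps^{3n}$ through the recurrence, computation of the rational fractions at orders $\eps^3$ and $\eps^6$, and substitution of $t=b$ (where $t^2-t+b=b^2\neq 0$). The extra care you take in justifying that the expansion remains valid at the focus--node point itself --- normal hyperbolicity of the double eigenvalue $b>0$ and the explicit Airy representation --- only makes explicit what the paper disposes of in one sentence ("the slow curve is repulsive for all positive $t$"), so the argument is correct and not materially different.
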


\subsubsection{Evaluation of $X_-(b)$}\lb{para:Xm}
The simple method above is not convenient to evaluate $X_-(b)$ because we expect that $X_-$ does not go along the slow manifold in a neighborhood of $b$. 

We will use the explicit formula  (\ref{solm}) to evaluate  $X_-(b)$. The computation is a little bit tedious. In all the formulae below, the symbol $\ip$~ represent a quantity which goes to zero when $\eps>0$ goes to zero.

The inverse of the matrix $M$ is easy to compute: we know the determinant of $M$ (see the property \ref{Airy:det} in the appendix on Airy's functions). 
$$M^{-1}(\tau)\ =\ -2i\sqrt{\frac{\pi}{\eps}}\left(\begin{array}{cc}
\eps j^2A'\left(j^2\frac{\tau-b}{\eps^2}\right)&-A\left(j^2\frac{\tau-b}{\eps^2}\right)\\
-\eps jA'\left(j\frac{\tau-b}{\eps^2}\right)&A\left(j\frac{\tau-b}{\eps^2}\right)\end{array}\right)$$

To compute the integrals in formula (\ref{solm}), we change the real path  of integration $]-\infty,b]$. For some integrals we choose a path which goes down the relief $R_\lambda$ from $-\infty$ to $b$, for other integrals, we choose the conjugate path which goes down the relief $R_\mu$ (the idea is the same as in Callot's proof of theorem \ref{th:Callot}). The path which goes down $R_\lambda$ is drawn on figure \ref{fig:relief}. The end of the path is a vertical segment from $b+i\beta$ to $b$. At point $b$, it is tangent to the level curve of the relief, then, the path  does not go down the relief with the precise definition \ref{def:chemindescendant}. Thus, we have to be care with approximations at this point.

Let us denote
$$f(\tau)\ =\ e^{\ud\frac{b^2-\tau^2}{\eps^3}}A\left(j^2\frac{\tau-b}{\eps^2}\right)$$
It is one of the function we have to integrate to evaluate $X_-$. 

\begin{lemma}\lb{lem:estim}
Let us give $\tau$ such that $\tau-b$ is non infinitesimal and 
$\ut\pi<\arg(\tau-b)<\pi$. Then
$$|f(\tau)|\ =\  \exp\left(\frac{-1}{\eps^3}(R_\lambda(\tau)-R_\lambda(b)+\ip)\right) $$ 
\end{lemma}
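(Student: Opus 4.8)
The lemma is an asymptotic estimate of $|f(\tau)|$ for $\tau$ in the sector $\ut\pi<\arg(\tau-b)<\pi$, away from $b$. The natural approach is to combine two ingredients: (i) the known asymptotics of the Airy function $A$ for large argument in the relevant sector, recalled in the appendix, and (ii) the explicit formula $F_\lambda(t)=\ud t^2-\dt(t-b)^\td-\dt i b^\td$ for the relief, so that one can identify the exponential rate coming from the Airy asymptotics with $R_\lambda$. The argument of $A$ is $j^2\frac{\tau-b}{\eps^2}$, whose modulus is infinitely large since $\tau-b$ is appreciable and $\eps$ is infinitesimal; one must only check that $\arg\!\left(j^2\frac{\tau-b}{\eps^2}\right)$ stays in the range where the ``decaying'' Airy asymptotic $A(s)\sim C\,s^{-1/4}e^{-\dt s^{3/2}}$ is valid (rather than the oscillatory one), which is exactly what the hypothesis $\ut\pi<\arg(\tau-b)<\pi$ — i.e. $\arg(\tau-b)$ bounded away from both the Stokes line and the real axis — is designed to guarantee (recall $j=e^{2i\pi/3}$, so $\arg j^2=-2\pi/3$ after the chosen cut-off, or $4\pi/3$, and one tracks it carefully).

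Concretely, I would first write $|f(\tau)| = \left|e^{\ud\frac{b^2-\tau^2}{\eps^3}}\right|\cdot\left|A\!\left(j^2\frac{\tau-b}{\eps^2}\right)\right|$. For the first factor, $\left|e^{\ud\frac{b^2-\tau^2}{\eps^3}}\right| = \exp\!\left(\frac{1}{\eps^3}\Re\!\left(\ud b^2-\ud\tau^2\right)\right)$. For the second factor, substitute $s=j^2\frac{\tau-b}{\eps^2}$ into the Airy asymptotic: the dominant exponential is $\exp\!\left(-\dt\Re\bigl(s^{3/2}\bigr)\right) = \exp\!\left(-\dt\,\eps^{-3}\Re\bigl(j^3(\tau-b)^{3/2}\bigr)\right) = \exp\!\left(-\dt\,\eps^{-3}\Re\bigl((\tau-b)^\td\bigr)\right)$, using $j^3=1$ (here one must check the branch of the $3/2$-power matches the cut-off conventions of the paper, which is the delicate bookkeeping point). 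The algebraic prefactor $s^{-1/4}$ and the $\ip$-error from the asymptotic series contribute only a factor $\exp(\eps^{-3}\ip)$ after taking logs and multiplying by $\eps^{-3}$; one has to be slightly careful that $\frac{1}{\eps^3}\log\!\left|s^{-1/4}\right| = \frac{1}{\eps^3}\log\!\left(|\eps^2/(\tau-b)|^{1/4}\cdot|\text{const}|\right)$ is indeed infinitesimal, which holds because $\log\eps$ is negligible against $\eps^{-3}$.

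Collecting terms, $\frac{-1}{\eps^3}\log|f(\tau)| = \Re\!\left(\ud\tau^2-\ud b^2\right)+\dt\Re\bigl((\tau-b)^\td\bigr)+\ip$. Comparing with $F_\lambda(\tau)-F_\lambda(b) = \ud\tau^2-\ud b^2 -\dt(\tau-b)^\td$ (note $F_\lambda(b)=-\dt i b^\td$, so the $-\dt i b^\td$ terms cancel in the difference), we get $\frac{-1}{\eps^3}\log|f(\tau)| = \Re\bigl(F_\lambda(\tau)-F_\lambda(b)\bigr)+\dt\Re\bigl((\tau-b)^\td\bigr)$ plus $\ip$ — and since $\Re\bigl(F_\lambda(\tau)-F_\lambda(b)\bigr) = R_\lambda(\tau)-R_\lambda(b)-\dt\Re\bigl((\tau-b)^\td\bigr)$... wait, one must match signs against the paper's sign convention in the exponential of $f$. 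The sign of the exponent in $f$ was chosen (with $e^{\ud(b^2-\tau^2)/\eps^3}$) precisely so that the two $\Re\bigl((\tau-b)^\td\bigr)$ contributions reinforce to produce $R_\lambda(\tau)-R_\lambda(b)$ exactly; I would verify this sign match as the final line, giving $|f(\tau)| = \exp\!\left(\frac{-1}{\eps^3}\bigl(R_\lambda(\tau)-R_\lambda(b)+\ip\bigr)\right)$.

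The main obstacle is not any single estimate but the branch-tracking: the paper uses a cut-off for $()^\ud$ and $()^\td$ on the positive real axis, $j=e^{2i\pi/3}$, and the Airy asymptotics are only valid in a sharp sector, so one must verify that for $\ut\pi<\arg(\tau-b)<\pi$ the argument $j^2\frac{\tau-b}{\eps^2}$ lands in the validity sector of the decaying asymptotic (uniformly, since $\eps$ real positive does not rotate the argument), and that the $3/2$-powers appearing from Airy match $(\tau-b)^\td$ as defined by the paper's cut-off with no spurious sign. The boundary case $\arg(\tau-b)=\ut\pi$ is exactly where the relief $R_\lambda$ has its relevant descent path tangency near $b$ (the vertical segment mentioned before the lemma), which is why the estimate is only claimed strictly inside the sector and why the $\ip$ is not uniform down to the endpoint — this should be acknowledged but the interior estimate is all that is needed downstream.
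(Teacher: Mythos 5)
Your overall strategy is exactly the paper's: insert the decaying Airy asymptotic $A(s)=\frac{1}{2\sqrt{\pi}}e^{-\dt s^{\td}}s^{-\uq}(1+\ldots)$ at $s=j^2\frac{\tau-b}{\eps^2}$, check that the hypothesis $\ut\pi<\arg(\tau-b)<\pi$ places $s$ in the validity sector of that expansion, absorb the algebraic prefactor and the error into the $\ip$ after taking $\eps^{3}\log|\cdot|$ (your formula $\frac{1}{\eps^3}\log|s^{-\uq}|$ should read $\eps^{3}\log|s^{-\uq}|$, but the idea --- $\log\eps$ is negligible at that scale --- is correct), and identify the resulting exponent with $R_\lambda(\tau)-R_\lambda(b)$ using the explicit formula for $F_\lambda$.

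The genuine gap is the branch computation that you flag and then defer. The step $\Re(s^{\td})=\eps^{-3}\Re\bigl(j^3(\tau-b)^{\td}\bigr)=\eps^{-3}\Re\bigl((\tau-b)^{\td}\bigr)$ ``using $j^3=1$'' is false: $(j^2z)^{\td}$ is not $(j^2)^{\td}z^{\td}$ across branch cuts, and the two sides of the identity you need live on \emph{different} determinations --- the Airy expansion uses the classical determination of $(\;)^{\td}$ with cut on the negative real axis (see the footnote in the appendix), whereas $R_\lambda$ uses the determination with cut on $[0,+\infty[$. Writing $\tau-b=re^{i\theta}$ with $\theta\in]\ut\pi,\pi[$, one gets $j^2(\tau-b)=re^{i(\theta-\dt\pi)}$ with $\theta-\dt\pi\in]-\ut\pi,\ut\pi[$, hence the principal power gives $\bigl(j^2(\tau-b)\bigr)^{\td}=r^{\td}e^{i(\td\theta-\pi)}=-(\tau-b)^{\td}$: there is a minus sign. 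With your sign the collected exponent is $\Re(\ud\tau^2-\ud b^2)+\dt\Re\bigl((\tau-b)^{\td}\bigr)$, which differs from the target $R_\lambda(\tau)-R_\lambda(b)=\Re(\ud\tau^2-\ud b^2)-\dt\Re\bigl((\tau-b)^{\td}\bigr)$ by $\frac{4}{3}\Re\bigl((\tau-b)^{\td}\bigr)$, a quantity that is not infinitesimal in general --- this is exactly the mismatch you hit at ``wait, one must match signs''. The resolution is not a sign convention hidden in the exponential factor of $f$ (that factor only produces the $\ud\tau^2-\ud b^2$ part); it is the identity $\bigl(j^2(\tau-b)\bigr)^{\td}=-(\tau-b)^{\td}$ above, which is essentially the whole content of the paper's proof. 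Once that minus sign is in place the two $\dt$-terms combine with the correct sign and the lemma follows.
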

\begin{proof}
Using the asymptotic expansion of $A$ (see in appendix), we have:
$$A\left(j^2\frac{\tau-b}{\eps^2}\right)=\frac{1}{2\sqrt{\pi}}\exp\left(-\frac{2}{3}\left(j^2\frac{\tau-b}{\eps^2}\right)^{\td}\right)\left(j^2\frac{\tau-b}{\eps^2}\right)^{-\uq}(1+O(\eps^3))$$
Substituting in the formula of $f$, we have:
$$\eps^3\ln|f(\tau)|\ =\ \Re\left(\ud b^2-\ud\tau^2-\dt(j^2(\tau-b))^\td+\ip\right)$$
We write $\tau-b$ in polar coordinates: $\tau-b=re^{i\theta}$, with $\theta\in]\ut\pi,\pi[$. 
Then  $j^2(\tau-b)=re^{i(\theta-\dt\pi)}$. Because $\theta-\dt\pi$ has an argument between $-\ud\pi$ and $\ud\pi$, the power $\td$ gives $\left(j^2(\tau-b)\right)^{\td}=r^{\td}e^{i(\td\theta-\pi)}$. This expression can be writed $-(\tau-b)^\td$, with the same determination of  $t^{\td}$ as in  $R_\lambda$. 
\end{proof}
The interesting consequence of this lemma is that along the considered path, the function $f$ is increasing with a logarithmic derivative of type $\eps^{-3}$.
To precise, we need the following lemma:

\begin{lemma}\lb{lem:majo:f} 
There exist two constants $k$ and $\delta$ standard\footnote{here, it is the same to assume that $k$ and $\delta$ are independent of $\eps$}, positive such that
$$\forall\sigma\in[0,\frac{\beta}{\eps^2}]\mbox{~~~~,~~~~}\left|{f(b+i\sigma\eps^2)}\right|<ke^{-\delta\sigma^\td}$$ 
\end{lemma}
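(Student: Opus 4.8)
The plan is to substitute $\tau=b+i\sigma\eps^2$ directly into the definition of $f$ and then to bound the resulting Airy value by the sector estimate for $A$ recalled in the appendix, uniformly in $\sigma$. Exactly as in the proof of Lemma \ref{lem:estim} (writing $\tau-b=re^{i\theta}$ with $\theta=\ud\pi$ on the vertical segment), one finds $j^2\frac{\tau-b}{\eps^2}=\sigma e^{-i\pi/6}$ and $\left(j^2\frac{\tau-b}{\eps^2}\right)^{\td}=\sigma^{\td}e^{-i\pi/4}$, while $\ud\frac{b^2-\tau^2}{\eps^3}=-\frac{ib\sigma}{\eps}+\frac{\sigma^2\eps}{2}$. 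Hence
$$\left|f(b+i\sigma\eps^2)\right|\ =\ e^{\frac{\sigma^2\eps}{2}}\,\left|A\!\left(\sigma e^{-i\pi/6}\right)\right|.$$

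Next I would use that the argument $-\frac{\pi}{6}$ stays well inside the sector where the asymptotics of $A$ hold, so that the estimates of $A$ in the appendix give a uniform bound $\left|A(\sigma e^{-i\pi/6})\right|\leq C(1+\sigma)^{-\uq}e^{-\frac{2}{3}\Re\left((\sigma e^{-i\pi/6})^{\td}\right)}$ for all $\sigma\geq 0$, with $\Re\left((\sigma e^{-i\pi/6})^{\td}\right)=\sigma^{\td}\cos\frac{\pi}{4}=\frac{\sqrt2}{2}\sigma^{\td}$. Combining,
$$\left|f(b+i\sigma\eps^2)\right|\ \leq\ C\,(1+\sigma)^{-\uq}\,\exp\!\left(-\sigma^{\td}\left(\frac{\sqrt2}{3}-\frac{\sigma^\ud\eps}{2}\right)\right).$$
On $\sigma\in[0,\beta/\eps^2]$ one has $\sigma^\ud\eps\leq\sqrt\beta$; since the vertical part of the path must go down the relief $R_\lambda$ we have $\beta<\ud$ (and in any case $\beta$ may be taken as small as we wish), hence $\beta<\frac{8}{9}$ and $\delta:=\frac{\sqrt2}{3}-\frac{\sqrt\beta}{2}$ is standard and strictly positive. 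Then $\frac{\sqrt2}{3}-\frac{\sigma^\ud\eps}{2}\geq\delta$ throughout, so $\left|f(b+i\sigma\eps^2)\right|\leq C(1+\sigma)^{-\uq}e^{-\delta\sigma^{\td}}\leq Ce^{-\delta\sigma^{\td}}$ for $\sigma\geq 1$; for $\sigma\in[0,1]$ the left side is bounded by a standard constant (as $A$ is entire) while $e^{-\delta\sigma^{\td}}\geq e^{-\delta}$, so both ranges are absorbed into one standard constant $k$.

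I expect the main obstacle to be precisely the point $\sigma=0$, i.e. the branch point $b$: there the path is tangent to the level curve of $R_\lambda$, so Lemma \ref{lem:estim} gives nothing uniform, and this is why the lemma is stated separately; using the full sector estimate for $A$ rather than only its leading asymptotic term handles $\sigma$ bounded and $\sigma$ of order $\eps^{-2}$ simultaneously. The one inequality to keep an eye on is that the growth factor $e^{\sigma^2\eps/2}$ produced by $e^{\ud(b^2-\tau^2)/\eps^3}$ stays dominated by the Airy decay $e^{-\frac{\sqrt2}{3}\sigma^{\td}}$ on all of $[0,\beta/\eps^2]$; this is exactly what $\beta<\frac{8}{9}$ guarantees, and that bound is automatic once the path is required to descend the relief.
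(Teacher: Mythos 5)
Your proof is correct and follows essentially the same route as the paper's: reduce to $\left|f(b+i\sigma\eps^2)\right|=e^{\ud\sigma^2\eps}\left|A(ij^2\sigma)\right|$, use the Airy asymptotics with $\Re\left((ij^2)^{\td}\right)=\frac{\sqrt{2}}{2}$ for large $\sigma$, bound small $\sigma$ by continuity of $A$, and absorb the factor $e^{\ud\sigma^2\eps}$ via $\sigma^{\ud}\eps\leq\sqrt{\beta}$ under the condition $\beta<\frac{8}{9}$ (the paper reaches $\sigma\geq\omega$ by permanence rather than by a classical uniform sector bound, which is only a cosmetic difference). One small correction: your justification ``the vertical segment must go down the relief, hence $\beta<\ud$'' is not quite right --- the paper notes that this segment is tangent to a level curve at $b$ and does not strictly descend the relief --- but this is harmless since the operative constraint is exactly $\beta<\frac{8}{9}$, which you impose.
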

\begin{proof} By définition of $f$, we have 
$$f(b+i\sigma\eps^2)\ =\ e^{-\frac{b}{\eps}\sigma i}e^{\ud\sigma^2\eps}A(ij^2\sigma)\mbox{~~~~~~then~~~~~~}
\left|f(b+i\sigma\eps^2)\right|\ =\ e^{\ud\sigma^2\eps}\left|A(ij^2\sigma)\right|$$
For real positive infinitely large $\sigma$, the asymptotic expansion of the Airy function give the estimation
$$|A(ij^2\sigma)|\ =\ \frac{1}{2\sqrt{\pi}}\left|e^{-\dt (ij^2\sigma)^\td}\right||ij^2\sigma|^{-\uq}(1+\ip)\ =\ 
\frac{1}{2\sqrt{\pi}}\sigma^{-\uq}e^{-\frac{\sqrt{2}}{3}\sigma^\td}(1+\ip)$$
(we know that $\Re((ij^2)^\td)=\frac{\sqrt{2}}{2}$). Then if $\delta_1$ is real standard, less than  $\frac{\sqrt{2}}{3}$, we have the following inequality, true for all  $\sigma$ infinitely large:
$$|A(ij^2\sigma)|\ <\ e^{-\delta_1\sigma^\td}$$
By permanence\footnote{The non standard arguments in these proofs can be replaced by classical arguments, but, for that, new quantified variables have to be added, and it seems to me that the idea of the proof is more understandable with nonstandard language. }, this inequality believes true for all real $\sigma$ greater than some positive standard $\omega$. 
We can deduce the following majoration:
$$\forall\sigma\in[\omega,\frac{\beta}{\eps^2}]\mbox{~~~~,~~~~}\left|{f(b+i\sigma\eps^2)}\right|\ <\ e^{\ud\sigma^2\eps}e^{-\delta_1\sigma^{\td}}$$

For $\sigma<\omega$, we have:
$$\forall\sigma\in[0,\omega]\mbox{~~~~,~~~~}\left|{f(b+i\sigma\eps^2)}\right|\ <\ e^{\ud\omega^2\eps}k_1\ <\ 2k_1\mbox{~~~~with~~~~}k_1=\max_{\sigma\in[0,\omega]}|A(ij^2\sigma)|$$
Then we are looking for a constant  $\delta$ such that
$$\forall\sigma\in[0,\frac{\beta}{\eps^2}]\mbox{~~~~,~~~~}\ud\sigma^2\eps-\delta_1\sigma^\td\ <\ -\delta\sigma^\td$$
The inequality is equivalent to  $\sigma<\frac{4(\delta_1-\delta)^2}{\eps^2}$. A  choice of $\delta$  less than $\delta_1-\ud\sqrt{\beta}$ is convenient. This choice is possible only if $\delta_1>\ud\sqrt{\beta}$ what is true as soon as  $\beta<\frac{8}{9}$ and $\delta_1$ near enough from  $\frac{\sqrt{2}}{3}$. 

To verify the majoration of the lemma for $\sigma<\omega$, we can choose 
$$k\ =\ 2k_1e^{\delta\omega^\td}$$. 
\end{proof}

The next lemma is the more technical part of the article. The purpose is to evaluate an oscillating integral with successive integrations by parts.

\begin{lemma}\lb{lem:boutduchemin} 
We have the following expansion:
$$\int_{b+i\beta}^b f(\tau)d\tau\ =\ -\frac{1}{b}A(0)\eps^3\ -\ \frac{j^2}{b^2}A'(0)\eps^4\ -\ \frac{j}{b^3}A''(0)\eps^5\ +\ \left(\frac{1}{b^3}A(0)-\frac{1}{b^4}A'''(0)\right)\eps^6\ +\ \left(\frac{3j^2}{b^4}A'(0)-\frac{j^2}{b^5}A''''(0)\right)\eps^7\ +\ \ip\eps^7$$

\end{lemma}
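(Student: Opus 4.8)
The plan is to expand $f(\tau)$ near $\tau=b$ using the Taylor expansion of the Airy function at the origin and then integrate the resulting oscillatory expression by parts, keeping track of the orders in $\eps$ carefully. Along the vertical segment $\tau = b + i\sigma\eps^2$, $\sigma\in[0,\beta/\eps^2]$, write $f(b+i\sigma\eps^2) = e^{-\frac{b}{\eps}\sigma i}\,e^{\frac12 \sigma^2\eps}\,A(ij^2\sigma)$ exactly as in the proof of Lemma~\ref{lem:majo:f}. Changing the integration variable to $\sigma$ gives $\int_{b+i\beta}^b f(\tau)d\tau = -i\eps^2\int_0^{\beta/\eps^2} e^{-\frac{b}{\eps}\sigma i}\,e^{\frac12 \sigma^2\eps}\,A(ij^2\sigma)\,d\sigma$. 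The dominant oscillatory factor is $e^{-i\frac{b}{\eps}\sigma}$, whose antiderivative contributes a factor $\eps/(-ib)$ at each integration by parts; this is the source of the geometric-like prefactors $1/b, 1/b^2, \dots$ in the claimed expansion.

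First I would integrate by parts repeatedly with $u = e^{\frac12\sigma^2\eps}A(ij^2\sigma)$ and $dv = e^{-i\frac{b}{\eps}\sigma}d\sigma$. Each differentiation of $u$ produces two kinds of terms: a factor $\sigma\eps$ from differentiating $e^{\frac12\sigma^2\eps}$ (which raises the $\eps$-order), and a factor $ij^2$ times a derivative of $A$ from differentiating $A(ij^2\sigma)$. Evaluated at the lower endpoint $\sigma=0$ one uses $A(0), A'(0), A''(0), \dots$; note $(ij^2)^k$ produces the powers of $j$ and $j^2$ appearing in the statement (using $j^3=1$). The boundary term at $\sigma=\beta/\eps^2$ is exponentially small: by Lemma~\ref{lem:majo:f}, $|f(b+i\beta)| < k e^{-\delta(\beta/\eps^2)^{3/2}}$, which is $o(\eps^n)$ for every $n$, and the same estimate controls the analogous boundary terms after each integration by parts (the extra polynomial-in-$\sigma$ factors are absorbed). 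So only the $\sigma=0$ contributions survive up to the required order. Carrying the expansion to the order at which the $e^{\frac12\sigma^2\eps}$-correction first enters — it contributes at order $\eps^6$ the term $\frac{1}{b^3}A(0)\eps^6$ and at order $\eps^7$ the term $\frac{3j^2}{b^4}A'(0)\eps^7$ — and collecting gives exactly the five displayed terms, with remainder $\ip\,\eps^7$.

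I would make the bookkeeping rigorous as follows: write $A(ij^2\sigma) = \sum_{k\ge 0}\frac{(ij^2\sigma)^k}{k!}A^{(k)}(0)$ and $e^{\frac12\sigma^2\eps} = \sum_{m\ge 0}\frac{\sigma^{2m}\eps^m}{2^m m!}$, substitute into the integral, and use the elementary moment formula $\int_0^{\infty} \sigma^{\ell} e^{-i\frac{b}{\eps}\sigma}\,d\sigma = \ell!\,(i\eps/b)^{\ell+1}$ (valid after a standard regularization/rotation of contour, or equivalently obtained by the repeated integration by parts above), together with the exponentially small correction from replacing $\int_0^{\beta/\eps^2}$ by $\int_0^{\infty}$. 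The term indexed by $(k,m)$ contributes at order $\eps^{2}\cdot\eps^{k+2m+1}\cdot\eps^m = \eps^{k+3m+3}$ (counting the prefactor $-i\eps^2$, the $\eps^{k+2m+1}$ from the moment, and the $\eps^m$ from the exponential). Thus order $\le 7$ requires $k+3m\le 4$, i.e. $(k,m)\in\{(0,0),(1,0),(2,0),(3,0),(4,0),(0,1),(1,1)\}$: these are precisely the seven terms that assemble into the claimed expansion (the $\eps^3,\eps^4,\eps^5$ terms from $m=0$, $k=0,1,2$; the $\eps^6$ terms from $(k,m)=(3,0)$ and $(0,1)$; the $\eps^7$ terms from $(k,m)=(4,0)$ and $(1,1)$), and everything with $k+3m\ge 5$ is $O(\eps^8)=\ip\,\eps^7$.

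\textbf{Main obstacle.} The delicate point is the endpoint $\tau=b$ itself, flagged in the text: the integration path is only asymptotically tangent to the level curve of the relief there, so the naive relief estimate degrades and one cannot simply say "the tail is negligible." The honest way to handle it is to keep the contour exactly on the vertical segment $\tau\in[b,b+i\beta]$ (on which the computation above is exact and the factor $e^{\frac12\sigma^2\eps}$ is a genuine convergent series, not an asymptotic one), and to bound the discarded piece $\int_0^\infty - \int_0^{\beta/\eps^2}$ using Lemma~\ref{lem:majo:f} — this is why that lemma was proved first. The remaining work, controlling the higher moments $\int \sigma^{\ell} e^{\frac12\sigma^2\eps}|A(ij^2\sigma)|\,d\sigma$ uniformly in $\eps$ so that the tail remainder is $\ip\,\eps^7$ and not merely $O(1)$, is routine given the super-exponential Airy decay $|A(ij^2\sigma)|\le e^{-\delta_1\sigma^{3/2}}$ established there.
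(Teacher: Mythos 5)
Your proof is correct and follows essentially the same route as the paper's: parametrize the vertical segment by $\tau=b+i\eps^2\sigma$, integrate by parts repeatedly against the oscillating factor $e^{-ib\sigma/\eps}$ (each step contributing a factor $\eps/(ib)$), read the coefficients off the boundary terms at $\sigma=0$, and use Lemma \ref{lem:majo:f} to kill the boundary terms at $\sigma=\beta/\eps^2$ and to bound the residual integrals by standard integrable functions. The only real difference is organizational: the paper sets up the self-referential identities $I_3=\frac{j^2\eps}{b}\hat I$, $J_4=-\frac{\eps^3}{b^2}I$, $K_4=-\frac{2\eps^3}{b^2}I_2$, and bootstraps the expansion of $I$ one order at a time, whereas you index the contributions directly by the pair $(k,m)$ coming from the Taylor coefficients of $A$ and of $e^{\ud\sigma^2\eps}$; your selection rule $k+3m+3\le 7$ identifies exactly the seven surviving terms and reproduces the stated coefficients (I checked them). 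Two caveats. First, the moment formula should read $\int_0^{\infty}\sigma^{\ell}e^{-ib\sigma/\eps}d\sigma=\ell!\left(\frac{\eps}{ib}\right)^{\ell+1}=\ell!\left(\frac{-i\eps}{b}\right)^{\ell+1}$, not $\ell!\,(i\eps/b)^{\ell+1}$; with your sign the leading term would come out as $+\frac1bA(0)\eps^3$. Second, the double-series-plus-moments computation is a formal bookkeeping device, not the rigorous step: if you truncate the Taylor expansion of $A(ij^2\sigma)$ at degree $4$ and bound the remainder in absolute value, that remainder is only $O(\sigma^4)$ at infinity (the global Taylor bound for an entire function of order $\td$ is useless there), so $\eps^2\int_0^{\beta/\eps^2}|\mbox{remainder}|\,d\sigma$ is of order $\eps^{-8}$ and the oscillation must still be exploited on the remainder. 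The rigorous version of your bookkeeping is therefore precisely the repeated integration by parts applied to the exact integrand --- which is what you describe first and what the paper carries out --- with the $(k,m)$ table serving to predict which endpoint evaluations at $\sigma=0$ survive at each order.
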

\begin{proof}
Let us substitute $\tau$ by $b+i\eps^2\sigma$ in the integral. We have 
$$\int_{b+i\beta}^b f(\tau)d\tau \ =\ 
-i\eps^2\int_0^{\frac{\beta}{\eps^2}}f(b+i\eps^2\sigma)d\sigma\ =\ 
-i\eps^2\int_0^{\frac{\beta}{\eps^2}}e^{-\frac{b}{\eps}\sigma i}e^{\ud\sigma^2\eps}A(ij^2\sigma)d\sigma$$
The exponential $e^{-\frac{b}{\eps}\sigma i}$ is fast oscillating. The exponential $e^{\ud\sigma^2\eps}$ is infinitely close to  $1$ for all non infinitely large $\sigma$ and  $A(ij^2\sigma)$ is decreasing. All properties are checked to apply the method of integrations by parts. But there is a difficulty:  $e^{\ud\sigma^2\eps}$ is increasing and does not believe close to $1$. Now, let us explain the computations:

$$I\ =\ \int_{b+i\beta}^b f(\tau)d\tau\ =\ I_1+I_2+I_3\mbox{~~~~~~with}$$
$$I_1\ =\ \frac{\eps^3}{b}\left(f(b+\beta i)-f(b)\right)\mbox{~~~~~~} 
I_2\ =\ -\frac{\eps^4}{b}\int_0^{\frac{\beta}{\eps^2}}\sigma f(b+i\eps^2\sigma)d\sigma$$
$$I_3\ =\ -\frac{ij^2\eps^3}{b}\int_0^{\frac{\beta}{\eps^2}} \hat{f}(b+i\eps^2\sigma)d\sigma\mbox{~~~~with~~~~}
\hat{f}(b+i\eps^2\sigma)\ =\ e^{-\frac{b}{\eps}\sigma i}e^{\ud\sigma^2\eps}A'(ij^2\sigma)$$
With lemma \ref{lem:majo:f}, we know that  $f(b+\beta i)$ is exponentially smaller than  $f(b)=A(0)$.  Thus, we have 
$$I_1\ =\ 
-\frac{ 1}{b}A(0)\eps^3+\ip\eps^7$$
To estimate $I_2$, we perform a new integration by parts: 
$$I_2\ =\ J_1+J_2+J_3+J_4\mbox{~~~~~~with}$$
$$J_1\ =\ -\frac{i\eps^5}{b^2}\frac{\beta}{\eps^2}f(b+\beta i)\mbox{~~~~~~} 
J_2\ =\ \frac{i\eps^6}{b^2}\int_0^{\frac{\beta}{\eps^2}}\sigma^2 f(b+i\eps^2\sigma)d\sigma$$
$$J_3\ =\ -\frac{j^2\eps^5}{b^2}\int_0^{\frac{\beta}{\eps^2}}\sigma \hat{f}(b+i\eps^2\sigma)d\sigma\mbox{~~~~~~}
J_4\ =\ \frac{i\eps^5}{b^2}\int_0^{\frac{\beta}{\eps^2}}f(b+i\eps^2\sigma)d\sigma$$
Because $f(b+\beta i)$ is exponentially small, we have $J_1=\ip\eps^7$. We have also  $J_4=-\frac{\eps^3}{b^2}I$. If you substitute $A'$ for $A$ the expression $I_3$ is the same as $\frac{j^2\eps}{b}I$. All the arguments are the same with function $A'$ and function $A$.  Let us denote  $\hat{I}_i$, $\hat{J}_i$ the expressions obtained from $I_i$ and $J_i$ when $A'$ is substituted for $A$. Thus we have  $I_3=\frac{j^2\eps}{b}\hat{I}$. To estimate $J_2$ we perform a new integration by parts exactly as for evaluation of $I_2$~: $J_2=K_1+K_2+K_3+K_4$. All the integrals are bounded by a non infinitely large real number because all the integrated functions are bounded  (see lemma \ref{lem:majo:f}) by a integrable standard function. To summarize:
$$I\ =\ I_1+I_2+I_3\mbox{~~~~}I_2=J_1+J_2+J_3+J_4\mbox{~~~~}J_2=K_1+K_2+K_3+K_4$$
$$I_1\ =\ -\frac{ 1}{b}A(0)\eps^3+\ip\eps^7\mbox{~~~~~~}
J_1\ =\ \ip\eps^7\mbox{~~~~~~}K_1\ =\ \ip\eps^7$$
$$I_2=\ip\eps^3\mbox{~~~~~~}J_2=\ip\eps^5\mbox{~~~~~~}
K_2=\frac{\eps^8}{b^3}\int_0^{\frac{\beta}{\eps^2}}\sigma^3 f(b+i\eps^2\sigma)d\sigma=\ip\eps^7$$
\begin{equation}\lb{formules}I_3=\frac{j^2\eps}{b}\hat{I}\mbox{~~~~~~}J_3=\frac{j^2\eps}{b}\hat{I}_2\mbox{~~~~~~}K_3=\frac{j^2\eps}{b}\hat{J}_2
\mbox{~~~~~~}J_4=-\frac{\eps^3}{b^2}I\mbox{~~~~~~}K_4=-\frac{2\eps^3}{b^2}I_2\end{equation}
Then, all the ingredients are given, and we can compute the asymptotic expansion of $I$ in powers of $\eps$. To start, we have  $I=\ip\eps$. For similar reason, $\hat{I}=\ip\eps$. Then, using formulae \ref{formules}, we have $I_3=\ip\eps^2$, then $I=\ip\eps^2$. We iterate the process, inserting the known approximations in formulae \ref{formules}, and we obtain a better approximation: $I_3=\ip\eps^3$ then 
$$I\ =\ -\frac{1}{b}A(0)\eps^3\ +\ \ip\eps^3$$
The next step:
$$I_3=-\frac{j^2}{b^2}A'(0)\eps^4+\ip\eps^4\mbox{~~~~~~~~}
J_3=\ip\eps^4\mbox{~~~~~~}J_4=\frac{1}{b^2}A(0)\eps^6+\ip\eps^6\mbox{~~~~~~}
I_2=\ip\eps^4$$
$$I\ =\ -\frac{1}{b}A(0)\eps^3\ -\frac{j^2}{b^2}A'(0)\eps^4\ +\ \ip\eps^4$$
The next step: (do not use the relation $A''(0)=0$, because we have sometimes to substitute $A'$ for $A$):
$$I_3=-\frac{j^2}{b^2}A'(0)\eps^4-\frac{j}{b^3}A''(0)\eps^5+\ip\eps^5\mbox{~~~~~~}J_3=\ip\eps^5\mbox{~~~~~~}I_2=\ip\eps^5$$
$$I\ =\ -\frac{1}{b}A(0)\eps^3\ -\frac{j^2}{b^2}A'(0)\eps^4\ -\ \frac{j}{b^3}A''(0)\eps^5\ +\ \ip\eps^5$$
The next step:
$$I_3=-\frac{j^2}{b^2}A'(0)\eps^4-\frac{j}{b^3}A''(0)\eps^5-\frac{1}{b^4}A'''(0)\eps^6+\ip\eps^6\mbox{~~~~~~}J_3=\ip\eps^6\mbox{~~~~~~}K_3=\ip\eps^6$$
$$J_4=\frac{1}{b^3}A(0)\eps^6+\frac{j^2}{b^4}A'(0)\eps^7+\ip\eps^7\mbox{~~~~~~}K_4=\ip\eps^8$$
$$J_2=\ip\eps^6\mbox{~~~~~~}I_2=\frac{1}{b^3}A(0)\eps^6+\ip\eps^6$$
$$I\ =\ -\frac{1}{b}A(0)\eps^3\ -\frac{j^2}{b^2}A'(0)\eps^4\ -\ \frac{j}{b^3}A''(0)\eps^5\ +\ \left(\frac{1}{b^3}A(0)-\frac{1}{b^4}A'''(0)\right)\eps^6+\ip\eps^6$$
The last step:
$$I_3=-\frac{j^2}{b^2}A'(0)\eps^4-\frac{j}{b^3}A''(0)\eps^5-\frac{1}{b^4}A'''(0)\eps^6+\left(\frac{j^2}{b^4}A'(0)-\frac{j^2}{b^5}A''''(0)\right)\eps^7+\ip\eps^7\mbox{~~~~~~}
J_3=\frac{j^2}{b^4}A'(0)\eps^7+\ip\eps^7\mbox{~~~~~~}
K_3=\ip\eps^7$$
$$J_2=\ip\eps^7\mbox{~~~~~~}I_2=\frac{1}{b^3}A(0)\eps^6+\frac{2j^2}{b^4}A'(0)\eps^7+\ip\eps^7$$
$$I\ =\ -\frac{1}{b}A(0)\eps^3\ -\ \frac{j^2}{b^2}A'(0)\eps^4\ -\ \frac{j}{b^3}A''(0)\eps^5\ +\ \left(\frac{1}{b^3}A(0)-\frac{1}{b^4}A'''(0)\right)\eps^6\ +\ \left(\frac{3j^2}{b^4}A'(0)-\frac{j^2}{b^5}A''''(0)\right)\eps^7\ +\ \ip\eps^7$$
\end{proof}

\begin{lemma}\lb{lem:debutduchemin} 
$$\int_{-\infty}^{b+i\beta} f(\tau)d\tau =\ \exp\left(\frac{-1}{\eps^3}(R_\lambda(b+i\beta)-R_\lambda(b)+\ip)\right)$$
\end{lemma}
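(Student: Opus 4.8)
This is a Laplace-type (steepest-descent-at-an-endpoint) estimate for the oscillating integral $\int_{-\infty}^{b+i\beta}f$ taken along the path $\gamma$ of Figure~\ref{fig:relief}, which goes down the relief $R_\lambda$ from $-\infty$ to $b+i\beta$. First I would record the exact size of the integrand: on $\gamma$ the quantity $\tau-b$ stays non infinitesimal (the nearest approach to $0$ is at the endpoint, where $|\tau-b|=\beta$) and $\arg(\tau-b)$ stays in $[\ut\pi,\pi]$, so $\arg(j^2(\tau-b))\in[-\ut\pi,\ut\pi]$, which is the closed sector where the asymptotic expansion of $A$ used in the proof of Lemma~\ref{lem:estim} applies. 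Substituting it into the definition of $f$, one gets, uniformly on $\gamma$,
$$f(\tau)\ =\ \frac{1}{2\sqrt{\pi}}\left(j^2\frac{\tau-b}{\eps^2}\right)^{-\uq}e^{\frac{1}{\eps^3}\left(F_\lambda(b)-F_\lambda(\tau)\right)}(1+\ip)\ ,\qquad\mbox{hence}\qquad|f(\tau)|\ =\ \exp\left(\frac{-1}{\eps^3}\left(R_\lambda(\tau)-R_\lambda(b)+\ip\right)\right)\ ,$$
the algebraic prefactor being harmless since its logarithm is $O(\ln(1/\eps))=\ip\,\eps^{-3}$; near $-\infty$ the crude bound $|A(z)|\le C|z|^{-\td}e^{\dt|z|^\td}$ already makes the integral converge with a super-exponentially small tail, so that part is discarded. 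Along $\gamma$ the relief $R_\lambda$ is strictly decreasing (Definition~\ref{def:chemindescendant}; the tangency to a level curve occurs only at $b$, which lies beyond $b+i\beta$ on the path), so $R_\lambda(\tau)\ge R_\lambda(b+i\beta)$ on $\gamma$, with equality only at the endpoint.

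The upper bound is then immediate: splitting $\gamma$ at a standard positive arc-length from $b+i\beta$, on the far piece $R_\lambda\ge R_\lambda(b+i\beta)+\eta$ for a standard $\eta>0$, so that piece contributes something exponentially smaller than the target, while on the near piece the triangle inequality and the estimate above give $\left|\int_\gamma f\right|\le\exp\left(\frac{-1}{\eps^3}\left(R_\lambda(b+i\beta)-R_\lambda(b)+\ip\right)\right)$, the $\ip$ absorbing the convergent integral of the integrable factor $|\tau-b|^{-\uq}$ and the powers of $\eps$. This is all one needs for the applications (it shows the piece of the path handled here is exponentially small compared with the $O(\eps^3)$ contribution of Lemma~\ref{lem:boutduchemin}, since $R_\lambda(b+i\beta)>R_\lambda(b)$). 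For the matching lower bound I would deform $\gamma$, using that $f$ is analytic off the cut $[b,+\infty[$ and super-exponentially small at infinity, onto the steepest-descent curve of $R_\lambda$ ending at $b+i\beta$ --- the curve $\Im F_\lambda=\Im F_\lambda(b+i\beta)$, along which $R_\lambda$ increases monotonically from $R_\lambda(b+i\beta)$ and $f$ has constant phase. On that curve the integral is a genuine non-oscillating Laplace integral concentrated at the endpoint, and Watson's lemma gives that it equals a power of $\eps$ times $\frac{1}{2\sqrt{\pi}}\left(j^2\frac{i\beta}{\eps^2}\right)^{-\uq}e^{\frac{1}{\eps^3}(F_\lambda(b)-F_\lambda(b+i\beta))}(1+\ip)$, whose modulus is exactly $\exp\left(\frac{-1}{\eps^3}\left(R_\lambda(b+i\beta)-R_\lambda(b)+\ip\right)\right)$. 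Together with the upper bound this gives the claimed equality.

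The main obstacle is this last step: the upper estimate is routine, but the lower bound requires ruling out destructive cancellation, i.e. the steepest-descent deformation near $b+i\beta$ together with the verification that the relief geometry of Figure~\ref{fig:relief} lets this curve run from $b+i\beta$ to infinity without crossing the cut $[b,+\infty[$ or the mountain pass $t_c$, and the (harmless) check of what happens if the phase happens to be stationary at $b+i\beta$, which only changes the subexponential power of $\eps$ absorbed in $\ip$.
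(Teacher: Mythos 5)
Your first half is the paper's entire proof: the paper disposes of this lemma in one sentence, observing that the chosen path goes down the relief $R_\lambda$ so that the statement is a corollary of the majoration in Lemma \ref{lem:estim}. Your upper-bound argument (split the path near $b+i\beta$, absorb the algebraic prefactor $|\tau-b|^{-\uq}$ and the powers of $\eps$ into the $\ip$ in the exponent, discard the super-exponentially small tail at $-\infty$) is exactly that, spelled out. The second half --- the steepest-descent deformation to get a matching lower bound and rule out cancellation --- is not in the paper at all. You are right that a literal reading of the stated equality requires it, but the paper implicitly treats the formula as a log-scale majoration: since $R_\lambda(b+i\beta)>R_\lambda(b)$, the only use made of this lemma (in Lemma \ref{lem:DA}) is that the contribution of the piece $]-\infty,b+i\beta]$ is exponentially small compared with the $O(\eps^3)$ terms coming from Lemma \ref{lem:boutduchemin}, so only the upper bound ever enters. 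You correctly identified this yourself. So your proposal is sound and strictly stronger than what the paper proves; the extra steepest-descent work buys a genuine two-sided estimate (at the cost of the geometric verification you flag, that the constant-phase curve from $b+i\beta$ avoids the cut $[b,+\infty[$ and the pass $t_c$), whereas the paper's one-line argument buys exactly, and only, what the subsequent computation needs.
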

\begin{proof} The chosen path goes down the relief $R_\lambda$, then the lemma  is a corollary of the majoration of lemma \ref{lem:estim}. 
\end{proof}

\begin{lemma}\lb{lem:DA}
$$\int_{-\infty}^{b}e^{\ud\frac{b^2-\tau^2}{\eps^3}}A\left(j^2\frac{\tau-b}{\eps^2}\right)d\tau\ =\ -\frac{1}{b}A(0)\eps^3\ -\ \frac{j^2}{b^2}A'(0)\eps^4\ +\ \left(\frac{1}{b^3}-\frac{1}{b^4}\right)A(0)\eps^6\ +\ \left(\frac{3j^2}{b^4}-\frac{2j^2}{b^5}\right)A'(0)\eps^7\ +\ \ip\eps^7$$
$$\int_{-\infty}^{b}e^{\ud\frac{b^2-\tau^2}{\eps^3}}A'\left(j^2\frac{\tau-b}{\eps^2}\right)d\tau\ =\ 
-\frac{1}{b}A'(0)\eps^3\ -\ \frac{j}{b^3}A(0)\eps^5\ +\ 
\left(\frac{1}{b^3}-\frac{2}{b^4}\right)A'(0)\eps^6\ +\ \ip\eps^7$$
$$\int_{-\infty}^{b}e^{\ud\frac{b^2-\tau^2}{\eps^3}}A\left(j\frac{\tau-b}{\eps^2}\right)d\tau\ =\ -\frac{1}{b}A(0)\eps^3\ -\ \frac{j}{b^2}A'(0)\eps^4\ +\ \left(\frac{1}{b^3}-\frac{1}{b^4}\right)A(0)\eps^6\ +\ \left(\frac{3j}{b^4}-\frac{2j}{b^5}\right)A'(0)\eps^7\ +\ \ip\eps^7$$
$$\int_{-\infty}^{b}e^{\ud\frac{b^2-\tau^2}{\eps^3}}A'\left(j\frac{\tau-b}{\eps^2}\right)d\tau\ =\ 
-\frac{1}{b}A'(0)\eps^3\ -\ \frac{j^2}{b^3}A(0)\eps^5\ +\ 
\left(\frac{1}{b^3}-\frac{2}{b^4}\right)A'(0)\eps^6\ +\ \ip\eps^7$$
\end{lemma}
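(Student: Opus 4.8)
The plan is to obtain all four integrals from the two preceding lemmas, using only the Airy differential equation $A''(x)=xA(x)$ and a complex-conjugation symmetry; no new estimate is needed. First I would treat the first integral, whose integrand is exactly the function $f$ of Lemma~\ref{lem:boutduchemin}. Since $f$ is entire in $\tau$ and decays super-exponentially as $\Re(\tau)\to-\infty$ (because of the factor $e^{-\ud\tau^2/\eps^3}$), the real path $]-\infty,b]$ can be deformed onto the path of figure~\ref{fig:relief}: down the relief $R_\lambda$ from $-\infty$ to $b+i\beta$, then along the vertical segment $[b+i\beta,b]$. I would split
$$\int_{-\infty}^{b}f(\tau)d\tau\ =\ \int_{-\infty}^{b+i\beta}f(\tau)d\tau\ +\ \int_{b+i\beta}^{b}f(\tau)d\tau .$$
By Lemma~\ref{lem:debutduchemin} the first piece equals $\exp\left(\frac{-1}{\eps^3}(R_\lambda(b+i\beta)-R_\lambda(b)+\ip)\right)$; since $\beta<\frac{8}{9}$, the number $R_\lambda(b+i\beta)-R_\lambda(b)$ is a positive standard real (exactly the quantity estimated inside Lemma~\ref{lem:majo:f}), so this piece is exponentially small and is absorbed in $\ip\eps^7$. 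The second piece is the expansion of Lemma~\ref{lem:boutduchemin}. It then remains to simplify the derivatives of $A$ at $0$ occurring there: from $A''=xA$ one gets $A''(0)=0$, $A'''(0)=A(0)$ and $A''''(0)=2A'(0)$, which removes the $\eps^5$ term and turns the $\eps^6$ and $\eps^7$ coefficients into $\left(\frac{1}{b^3}-\frac{1}{b^4}\right)A(0)$ and $\left(\frac{3j^2}{b^4}-\frac{2j^2}{b^5}\right)A'(0)$; this is the first formula.

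For the second integral the integrand is the same with $A$ replaced by $A'$. Since $A'$ has an asymptotic expansion of exactly the same shape as $A$, I would note that Lemmas~\ref{lem:estim}, \ref{lem:majo:f} and \ref{lem:boutduchemin} hold verbatim with $A$ replaced by $A'$ — the integrations by parts in Lemma~\ref{lem:boutduchemin} never use a particular value of $A$ — so after the same deformation one obtains the analogue of that expansion with $A,A',A'',A''',A''''$ all shifted one order up. Reducing with $A''(0)=0$, $A'''(0)=A(0)$, $A''''(0)=2A'(0)$ and $A'''''(0)=0$ (differentiate $A''=xA$ twice more) yields the second formula.

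For the third and fourth integrals I would use conjugation. For real $\tau\le b$ the factor $e^{\ud(b^2-\tau^2)/\eps^3}$ is real, the Airy function and its derivative are real on the real axis (see the appendix), so $\overline{A(z)}=A(\overline{z})$ and $\overline{A'(z)}=A'(\overline{z})$, and $\overline{j}=j^2$. Hence the two integrals containing $j$ are the complex conjugates of the two containing $j^2$ already computed, and since $A(0)$, $A'(0)$, $b$ and $\eps$ are real while $\overline{j^2}=j$, conjugating the first two formulas gives the last two.

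The assembly contains no deep difficulty; the points to watch are (i) the legitimacy of the contour deformation, which rests entirely on the exponential estimates of Lemmas~\ref{lem:estim} and \ref{lem:debutduchemin}; (ii) the assertion that the whole integration-by-parts scheme of Lemma~\ref{lem:boutduchemin} transfers unchanged to $A'$; and, above all, (iii) the bookkeeping of $A''(0),\dots,A'''''(0)$ through the Airy equation — as the proof of Lemma~\ref{lem:boutduchemin} warns, one must not use $A''(0)=0$ until all the substitutions $A\mapsto A'$ have been carried out.
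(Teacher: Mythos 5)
Your proof is correct and follows essentially the same route as the paper: combine Lemmas \ref{lem:boutduchemin} and \ref{lem:debutduchemin} for the first formula, repeat with $A'$ in place of $A$ for the second, and obtain the last two by complex conjugation. The details you supply beyond the paper's terse argument (the relations $A'''(0)=A(0)$, $A''''(0)=2A'(0)$, $A'''''(0)=0$ from the Airy equation, and the positivity of $R_\lambda(b+i\beta)-R_\lambda(b)$ for $\beta<\frac{8}{9}$) are accurate and consistent with what the paper leaves implicit.
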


\begin{proof}
With lemmas \ref{lem:boutduchemin} and \ref{lem:debutduchemin}, the firs part of the lemma is proved. A similar computation gives the second part, if we remember that $A''(0)=0$ but  $A'''(0)\neq 0$. So, the vanishing terms are not the same in the two formulas. The two last formulas are the complex conjugate of the two first one.\end{proof}

\begin{proposition}
$$X_-(b)\ =\ \left(\begin{array}{c}
\left(-\frac{1}{b}c_1+\frac{1}{b^2}c_2\right)\eps^3\ +\ \left(\left(\frac{1}{b^3}-\frac{2}{b^4}\right)c_1+\left(-\frac{3}{b^4}+\frac{2}{b^5}\right)c_2\right)\eps^6\ +\ O(\eps^9)\\
~\\
-\frac{1}{b}c_2\eps^3\ + \left(\frac{1}{b^3}c_1+\left(\frac{1}{b^3}-\frac{1}{b^4}\right)c_2\right)\eps^6\ +\ O(\eps^9)\\
\end{array}\right)$$
\end{proposition}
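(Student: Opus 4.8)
The plan is to evaluate the explicit formula (\ref{solm}) at \(t=b\) and to recognise the resulting integral as a linear combination of the four integrals already evaluated in Lemma~\ref{lem:DA}. Setting \(t=b\) and absorbing \(e^{\ud b^2/\eps^3}\) under the integral sign rewrites (\ref{solm}) as
$$X_-(b)\ =\ M(b)\int_{-\infty}^{b}e^{\ud\frac{b^2-\tau^2}{\eps^3}}M^{-1}(\tau)\,d\tau\left(\begin{array}{c}c_1\\c_2\end{array}\right).$$
From (\ref{eqM}) one has \(M(b)=\sqrt{\pi/\eps}\left(\begin{array}{cc}A(0)&A(0)\\\eps jA'(0)&\eps j^2A'(0)\end{array}\right)\); together with the formula for \(M^{-1}(\tau)\) recalled in paragraph~\ref{para:Xm}, this shows that \(M(b)M^{-1}(\tau)\) equals \(-2i(\pi/\eps)\) times a \(2\times 2\) matrix each of whose entries is an explicit combination, with coefficients built from \(A(0)\), \(A'(0)\), \(j\) and \(\eps\), of the four functions \(A\bigl(j^2\frac{\tau-b}{\eps^2}\bigr)\), \(A'\bigl(j^2\frac{\tau-b}{\eps^2}\bigr)\), \(A\bigl(j\frac{\tau-b}{\eps^2}\bigr)\), \(A'\bigl(j\frac{\tau-b}{\eps^2}\bigr)\). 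Since \(A\) is entire the integrand is holomorphic in \(\tau\) and decays at \(-\infty\) along the real axis, so Cauchy's theorem lets me replace the path \(]-\infty,b]\) by the one of paragraph~\ref{para:Xm}; each entry of the integral is then, line by line, a linear combination of the four expansions of Lemma~\ref{lem:DA}.

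It remains to carry out the algebra: substitute the four expansions, multiply through by \(M(b)\) and by the column vector \((c_1,c_2)\), and simplify. The simplification uses only \(j^3=1\), \(1+j+j^2=0\) (hence \(j^2-j=-i\sqrt3\)), \(\overline{j}=j^2\), and the Airy/Wronskian identity contained in \(\det M(b)=i/2\), namely \(2\sqrt3\,\pi A(0)A'(0)=-1\); one must also keep in mind that \(A''(0)=0\) while \(A'''(0)\neq0\), so the terms that drop out of the different lines of Lemma~\ref{lem:DA} are not the same. What happens is that, in each of the four scalar combinations entering the two components of \(X_-(b)\), all the intermediate contributions of the integrals cancel, so that after the global prefactor \(\pi/\eps\) only the orders \(\eps^3\) and \(\eps^6\) survive; the Wronskian relation then wipes out every occurrence of \(A(0)\) and \(A'(0)\), and one is left precisely with the two components displayed, which, it is worth noting, coincide with \(X_+(b)\).

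The only non-routine point is this bookkeeping: one must track the cube-root-of-unity phases carefully through the matrix product so that the final components come out real, and check that the advertised cancellations (of the intermediate powers of \(\eps\) and of \(A(0),A'(0)\)) really occur. As for the \(O(\eps^9)\) remainder, it reflects the fact that \(X_-\) is infinitesimal on a real left-neighbourhood of \(b\) (Callot's domain \({\cal D}_-\)), so that \(X_-(b)\) inherits an asymptotic expansion in powers of \(\eps^3\) exactly as in paragraph~\ref{para:Xp}; once the \(\eps^0\), \(\eps^3\) and \(\eps^6\) coefficients have been pinned down, the error is automatically \(O(\eps^9)\).
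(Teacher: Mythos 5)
Your main line of argument --- evaluate (\ref{solm}) at $t=b$, observe that the entries of $M(b)\int_{-\infty}^b e^{\ud(b^2-\tau^2)/\eps^3}M^{-1}(\tau)\,d\tau$ are exactly linear combinations of the four integrals of Lemma \ref{lem:DA}, then simplify using $j^2-j=-i\sqrt3$ and the Wronskian identity $2\sqrt3\,\pi A(0)A'(0)=-1$ --- is precisely the paper's proof (``insert the estimations of lemma \ref{lem:DA} in the explicit formula (\ref{solm}), and, after tedious simplifications\dots''), and your tracking of the phase cancellations and of the surviving powers of $\eps$ is correct.

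The genuine gap is in your last paragraph, on the $O(\eps^9)$ remainder. Lemma \ref{lem:DA} only carries an error $\ip\eps^7$, and after the prefactor $-2i\pi/\eps$ the column of $M(b)M^{-1}$ that multiplies $c_2$ comes with an error $\ip\eps^6$; so the lemma pins down the $\eps^3$ and $\eps^6$ coefficients but yields a remainder $o(\eps^6)$, not $O(\eps^9)$. Your patch --- that $X_-(b)$ ``inherits an asymptotic expansion in powers of $\eps^3$ exactly as in paragraph \ref{para:Xp}'' because $X_-$ is infinitesimal on a left neighbourhood of $b$ --- is exactly the argument the paper excludes at the opening of paragraph \ref{para:Xm} (``the simple method above is not convenient to evaluate $X_-(b)$ because we expect that $X_-$ does not go along the slow manifold in a neighborhood of $b$''): the formal slow-curve expansion controls a canard solution only in the interior of the interval along which it follows the slow curve, and its validity at the exit point $t=b$ is precisely the question. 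If your argument were available, the paper's closing conjecture (that $X_-(b)$ and $X_+(b)$ have the same full expansion) would be an immediate theorem and the entire Airy computation would be superfluous. To honestly reach $O(\eps^9)$ you would have to push the integrations by parts of Lemma \ref{lem:boutduchemin} two more orders; as written, your proof (like the paper's) establishes the displayed coefficients with a remainder that is only $o(\eps^6)$.
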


\begin{proof}Insert the estimations of lemma \ref{lem:DA} in the explicit formula (\ref{solm}), and, after tedious simplifications, the proposition is proved.
\end{proof}

\begin{conjecture}
The two values $X_-(b)$ and $X_+(b)$ have the same asymptotic expansion. 
\end{conjecture}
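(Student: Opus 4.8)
Since $X_-$ and $X_+$ are both solutions of the linear equation~(\ref{eqx}), their difference $D:=X_--X_+$ solves the homogeneous equation $\eps^3\dot D=J(t)D$, for which $e^{\ud\frac{t^2}{\eps^3}}M(t)$, with $M$ as in~(\ref{eqM}), is a fundamental matrix. Subtracting~(\ref{solm}) from~(\ref{solp}) and gluing the two half--line integrals gives
$$D(t)\ =\ e^{\ud\frac{t^2}{\eps^3}}M(t)\,w\mbox{~~~~with~~~~}w\ =\ \int_{-\infty}^{+\infty}e^{-\ud\frac{\tau^2}{\eps^3}}M^{-1}(\tau)\,d\tau\left(\begin{array}{c}c_1\\c_2\end{array}\right)\ \in\ \CC^2,$$
a vector that does not depend on $t$. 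The two propositions above already show that the expansions of $X_-(b)$ and $X_+(b)$ agree up to order $\eps^6$; to prove the conjecture it suffices to prove that $D(b)$ is beyond all orders in $\eps$, and I would in fact aim at the stronger estimate $D(b)=O(e^{-\delta/\eps^3})$ for some standard $\delta>0$.

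The plan is to estimate the vector $w$ by deforming the path of integration. The matrix $M(\tau)$ is built from the entire Airy functions $A(j\frac{\tau-b}{\eps^2})$ and $A(j^2\frac{\tau-b}{\eps^2})$ and $\det M(\tau)=\frac i2\neq0$, so the integrand $e^{-\ud\frac{\tau^2}{\eps^3}}M^{-1}(\tau)$ is entire in $\tau$ and the real line may be replaced by any path from $-\infty$ to $+\infty$. Exactly as in the proof of Lemma~\ref{lem:estim}, inserting the asymptotic expansion of the Airy function shows that each entry of $e^{-\ud\frac{\tau^2}{\eps^3}}M^{-1}(\tau)$ equals an algebraic factor (a power of $\eps$ times a power of the Airy argument) multiplied by $e^{-F_\lambda(\tau)/\eps^3}$, for the entries of the first row, or by $e^{-F_\mu(\tau)/\eps^3}$, for those of the second row; in particular, on a path lying in the upper half--plane inside the sector where that expansion is valid (cf.\ Lemma~\ref{lem:estim}), its modulus is, up to an algebraic factor, $e^{-R_\lambda(\tau)/\eps^3}$, resp.\ $e^{-R_\mu(\tau)/\eps^3}$. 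For the first component of $w$ I would then push the path onto one that stays \emph{high} on the relief $R_\lambda$: since $R_\lambda\to+\infty$ at both ends of the real axis and hypothesis HFN5 provides a single critical point $t_c$, a saddle of value $R_c$, the superlevel set $\{R_\lambda\geq R_c-\delta\}$ joins the two real ends for every standard $\delta>0$, so the path can be chosen there, avoiding $\tau=b$ and remaining in the upper half--plane (this is the global version, continued over $t_c$ down to $+\infty$, of the descending path of figure~\ref{fig:relief}). For the second component one uses the conjugate path together with the symmetry $R_\mu(\overline\tau)=R_\lambda(\tau)$ of HFN5. A steepest--descent evaluation through $t_c$ then gives $w=(\mbox{algebraic in }\eps)\,e^{-R_c/\eps^3}(1+\ip)$, and in all cases $w=O(e^{-(R_c-\delta)/\eps^3})$ for every standard $\delta>0$.

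It then remains to multiply by $e^{\ud\frac{b^2}{\eps^3}}M(b)$. By~(\ref{eqM}), $M(b)=\sqrt{\frac{\pi}{\eps}}\left(\begin{array}{cc}A(0)&A(0)\\\eps jA'(0)&\eps j^2A'(0)\end{array}\right)$ has all entries $O(\eps^{-1/2})$, hence $D(b)=e^{\ud\frac{b^2}{\eps^3}}M(b)\,w=O(e^{(\ud b^2-R_c+\delta)/\eps^3})$. Now $R_\lambda(b)=\Re F_\lambda(b)=\ud b^2$, and hypothesis HFN5 requires precisely $R_\lambda(b)<R_c$; for equation~(\ref{eqx}) this amounts to $6b^2-6b+1<0$, i.e.\ $\ud-\us\sqrt3<b<\ud+\us\sqrt3$, an interval which does contain the admissible range~(\ref{hyp:b}). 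Choosing $\delta<R_c-\ud b^2$ yields $D(b)=O(e^{-\delta'/\eps^3})$ with $\delta'>0$, so every coefficient of the asymptotic expansion of $X_-(b)-X_+(b)$ vanishes; this proves the conjecture, consistently with --- and refining --- the two propositions above.

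The delicate point is the second step. One must control, \emph{uniformly in $\eps$}, the algebraic prefactors along the deformed path; splice the estimates coming from the two different Airy functions $A(j\,\cdot)$ and $A(j^2\,\cdot)$ occurring in $M^{-1}$, whose recessive sectors do not coincide; handle the place where the natural descending path becomes tangent to a level curve of $R_\lambda$ near $\tau=b$, the very difficulty already met in Lemma~\ref{lem:boutduchemin} which there forced a local analysis at the $\eps^2$ scale; and preserve convergence as $\tau\to\pm\infty$ along the new path. This is precisely the analytic machinery that is still missing, and the same circle of estimates would be what is needed to settle Conjectures~\ref{conj1} and~\ref{conj2}.
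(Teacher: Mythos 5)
The paper offers no proof of this statement: it is left as a conjecture, supported only by a Maple verification that the two expansions agree up to order $\eps^9$. There is therefore nothing of the paper's to compare your argument with; you are attempting something the author did not do.

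Your reduction is correct and is, to my mind, the right way to attack the problem. Since (\ref{solp}) and (\ref{solm}) use the same fundamental matrix, the difference collapses to $X_-(b)-X_+(b)=e^{\ud b^2/\eps^3}M(b)\,w$ with $w$ a constant vector given by an integral over the whole real line, and the conjecture reduces to the single estimate $|w|=O(e^{-(R_c-\delta)/\eps^3})$, because $R_\lambda(b)=\ud b^2<R_c$ is exactly the inequality of HFN5 and, as you verify, holds on all of (\ref{hyp:b}). This is cleaner than comparing the two expansions term by term, and it identifies the exponentially small quantity $e^{-(R_c-R_\lambda(b))/\eps^3}$ that actually separates $X_-$ from $X_+$. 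Note that one of the obstacles you list at the end disappears in this formulation: a path constrained to $\{R_\lambda\ge R_c-\delta\}$ never approaches $\tau=b$ (since $R_\lambda(b)<R_c-\delta$ for $\delta$ small), so the Airy argument stays infinitely large along it and the delicate local analysis at scale $\eps^2$ of Lemma \ref{lem:boutduchemin} is not needed for $w$. What remains genuinely unproved is the central estimate itself, which has two parts that you assert rather than establish: (i) a global statement about the relief, namely that the superlevel set $\{R_\lambda\ge R_c-\delta\}$, for the determination obtained by letting $\arg(\tau-b)$ decrease continuously from $\pi$ to $0$ through the closed upper half-plane, connects $-\infty$ to $+\infty$ over the pass $t_c$ (visibly true on figure \ref{fig:relief} for $b=0.3$, but to be proved for all $b$ in (\ref{hyp:b})); and (ii) a uniform bound of each entry of $e^{-\ud\tau^2/\eps^3}M^{-1}(\tau)$ by an algebraic factor times $e^{-R_\lambda(\tau)/\eps^3}$ (respectively $e^{-R_\mu(\tau)/\eps^3}$ on the conjugate path, which is indeed forced for the second row, since $\arg(j(\tau-b))$ would leave the validity sector of the Airy asymptotics on the upper path) along the whole deformed path, together with control of the connecting arcs at infinity. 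Until (i) and (ii) are written out the conjecture is not proved; but the reduction is sound, the remaining work is of the same nature as (and, I believe, lighter than) the computations of paragraph \ref{para:Xm}, and your route would in fact yield the stronger statement that $X_-(b)-X_+(b)$ is exponentially small.
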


With Maple, I checked that the two expansions coincide until terms in $\eps^9$.

\section{Appendix: Airy's functions}\lb{annexe}
The Airy's equation is linear, non autonomous of second order. It is
\begin{equation}\lb{eq:Airy}
\frac{d^2x}{dt^2}\ = \ tx
\end{equation}
The pair $(A(t),B(t))$ of Airy's functions is a fondamental system of solutions. The function satisfy the following properties (these results can be found in every book on special functions). 
\begin{enumerate}
\item The value at the origin are: $$A(0)=3^{-\dt}\frac{1}{\Gamma(\dt)}\mbox{~~~~~~}A'(0)=-\frac{3^\us}{2}\frac{\Gamma(\dt)}{\pi}\mbox{~~~~~~}
B(0)=3^{-\us}\frac{1}{\Gamma(\dt)}\mbox{~~~~~~}B'(0)=\frac{3^\dt}{2}\frac{\Gamma(\dt)}{\pi}$$

\item \lb{Airy:DA} On a sector of angle less than $\dt\pi$, around the positive real axis\footnote{Take care: the determination of $t^\td$ is here the classical determination with a cut off on the negative real axis, not the determination choose along all this article}, the Airy's functions have an asymptotic expansion for $t$ going to infinity:  
$$A(t)\ =\frac{1}{2\sqrt{\pi}}{e}^{-\dt\,{t}^{\td}}{t}^{-\uq}(1+O(t^{-\td}))\mbox{~~~~~~~~}
A'(t)\ =-\frac{1}{2\sqrt{\pi}}{e}^{-\dt\,{t}^{\td}}{t}^{\uq}(1+O(t^{-\td}))$$
$$B(t)\ =\frac{1}{\sqrt{\pi}}{e}^{\dt\,{t}^{\td}}{t}^{-\uq}(1+O(t^{-\td}))\mbox{~~~~~~~~}
B'(t)\ =\frac{1}{\sqrt{\pi}}{e}^{\dt\,{t}^{\td}}{t}^{\uq}(1+O(t^{-\td}))$$
The functions $A$ et $B$ are oscillating when $t$ goes to $-\infty$.

\item Let us denote $j=e^{\dt i\pi}=-\ud+\frac{\sqrt{3}}{2}i$. The Airy's equation is invariant by the change of variable  $t\mapsto jt$, then  $A(jt)$ and $B(jt)$ are also solutions. So they can be written as a linear combination of $A(t)$ and $B(t)$. We perform an identification at point $0$ to find the coefficients:
$$A(jt)\ = -\ud j^2A(t)+\ud ij^2 B(t)\mbox{~~~~~~}B(jt)\ =\ \td ij^2A(t)-\ud j^2B(t)$$
$$A(j^2t)\ =\  -\ud jA(t)-\ud ij B(t)\mbox{~~~~~~}B(j^2t)\ =\ -\td ijA(t)-\ud jB(t)$$

\item \lb{Airy:det} Classicaly, the couple $(A(t),B(t))$ is chosen for a base of the set of solutions. It could be better (in a study in the complex plane) to choose $(A(jt),A(j^2t))$ for base. With Liouville's theorem, we prove that the following determinant is constant, and we compute its value at the origin.
$$\det\left(\begin{array}{cc}
A(jt)&A(j^2t)\\jA'(jt)&j^2A'(j^2t)
\end{array}\right)\ =\ \frac{i}{2\pi}$$
\end{enumerate}

\begin{figure}[ht]\lb{fig:Airy}
\begin{center}
\includegraphics[height=4cm,width=4cm]{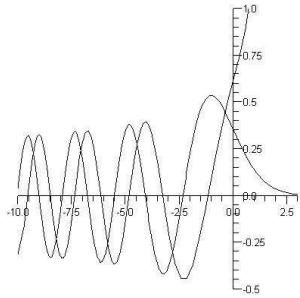}
\end{center}
\caption{Graphs of Airy's real functions $A$ and $B$}
\end{figure}

%\ajout{\tableofcontents}

\bibliographystyle{plain}
\bibliography{general}

\end{document}